\newtheorem{theorem}{Theorem}[section]
\newtheorem{proposition}[theorem]{Proposition}
\newtheorem{lemma}[theorem]{Lemma}
\newtheorem{corollary}[theorem]{Corollary}
\theoremstyle{definition}
\newtheorem{definition}[theorem]{Definition}
\newtheorem{problem}[theorem]{Problem}
\theoremstyle{remark}
\newtheorem{remark}[theorem]{Remark}
\newtheorem{example}[theorem]{Example}
\numberwithin{equation}{section}
\begin{document}
\title[Standard Special Generic Maps of Homotopy Spheres]{Standard Special Generic Maps of Homotopy Spheres into Euclidean Spaces}

\author[D.J. Wrazidlo]{Dominik J. Wrazidlo}
\address{Mathematisches Institut, Ruprecht-Karls-Universit\"{a}t Heidelberg, Im Neuenheimer Feld 205, 69120 Heidelberg, Germany}
\email{dwrazidlo@mathi.uni-heidelberg.de}
\thanks{This work was supported by a scholarship for doctoral research awarded by the German National Merit Foundation.}

\subjclass[2010]{Primary 57R45; Secondary 57R60, 58K15}

\date{\today.}

\keywords{Special generic map; Stein factorization; homotopy sphere; Gromoll filtration.}

\begin{abstract}
A so-called special generic map is by definition a map of smooth manifolds all of whose singularities are definite fold points.
It is in general an open problem posed by Saeki in 1993 to determine the set of integers $p$ for which a given homotopy sphere admits a special generic map into $\mathbb{R}^{p}$. \par
By means of the technique of Stein factorization we introduce and study certain special generic maps of homotopy spheres into Euclidean spaces called \emph{standard}.
Modifying a construction due to Weiss, we show that standard special generic maps give naturally rise to a filtration of the group of homotopy spheres by subgroups that is strongly related to the Gromoll filtration.
Finally, we apply our result to concrete homotopy spheres, which particularly answers Saeki's problem for the Milnor $7$-sphere.
\end{abstract}

\maketitle

\section{Introduction}

A smooth map $f$ between smooth manifolds is traditionally called a \emph{special generic map} if every singular point $x$ of $f$ is a definite fold point, i.e., $f$ looks in suitable charts around $x$ and $f(x)$ like the multiple suspension of a positive definite quadratic form (see \Cref{subsection special generic maps}).

For a closed smooth manifold $M^{n}$ of dimension $n$, let $S(M^{n})$ denote the set of all integers $p \in \{1, \dots, n\}$ for which there exists a special generic map $M^{n} \rightarrow \mathbb{R}^{p}$.
Note that $S$ is a diffeomorphism invariant of smooth manifolds.
In 1993, the following problem was posed by Saeki in \cite[Problem 5.3, p. 177]{S2} (see also \cite{S3}).

\begin{problem}\label{study special generic maps}
Study the set $S(M^{n})$.
\end{problem}

For orientable $M^{n}$, Eliashberg \cite{E} showed that $n \in S(M^{n})$ if and only if $M^{n}$ is stably parallelizable.
We are concerned with the case that $1 \in S(M^{n})$, i.e., the case that $M^{n}$ admits a special generic map into $\mathbb{R}$.
Such a map is usually referred to as \emph{special generic function}, and is nothing but a Morse function all of whose critical points are extrema.
If $M^{n}$ admits a special generic function, then every component of $M^{n}$ is homeomorphic to $S^{n}$ by a well-known theorem of Reeb \cite{R}, and is for $n \leq 6$ even known to be diffeomorphic to $S^{n}$.
If $\Sigma^{n}$ denotes an exotic sphere of dimension $n \geq 7$, then according to \cite[(5.3.4), p. 177]{S2} (compare \Cref{remark on study of sgm}) we have
\begin{align}\label{sgm on exotic spheres}
\{1, 2, n\} \subset S(\Sigma^{n}) \subset \{1, 2, \dots, n-4, n\}.
\end{align}

Special generic functions can also prove useful in the study of individual homotopy spheres.
Indeed, a homotopy sphere is said to have \emph{Morse perfection} $\geq p$ (see \Cref{Morse perfection}) if it admits a family of special generic functions smoothly parametrized by points of the unit $p$-sphere that is subject to an additional symmetry condition.
In \cite{W} it is shown that the notion of Morse perfection is related as follows to the celebrated \emph{Gromoll filtration} (see \Cref{The Gromoll filtration}) of a homotopy sphere.

\begin{theorem}\label{gromoll filtration and morse perfection}
If $\Sigma^{n}$ is a homotopy sphere of dimension $n \geq 7$, then
$$
(\text{Gromoll filtration of } \Sigma^{n})-1 \leq \text{Morse perfection of } \Sigma^{n}.
$$
\end{theorem}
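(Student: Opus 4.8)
The plan is to establish the equivalent inequality
$\text{Morse perfection of }\Sigma^{n} \geq (\text{Gromoll filtration of }\Sigma^{n})-1$
by an explicit construction: assuming $\Sigma^{n}$ lies in the $k$-th stage of the Gromoll filtration, I would build a family of special generic functions on $\Sigma^{n}$ parametrized by the sphere $S^{k-1}$ and satisfying the antipodal symmetry $f_{-x}=-f_{x}$, thereby exhibiting Morse perfection at least $k-1$. The backbone of the argument is a dictionary between special generic functions and presentations of $\Sigma^{n}$ as a twisted sphere. A special generic function $f\colon\Sigma^{n}\to\mathbb{R}$ with a single minimum and a single maximum cuts $\Sigma^{n}$ into two closed disks $D_{-}$ (below a regular value) and $D_{+}$ (above it) glued along a regular level sphere; comparing the two radial identifications of this sphere with the standard $S^{n-1}$ produces a clutching diffeomorphism $\phi\in\mathrm{Diff}(S^{n-1})$ whose isotopy class is the element of $\Theta_{n}$ represented by $\Sigma^{n}$. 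Under this dictionary a $p$-parameter family of special generic functions corresponds to a $p$-parameter family of clutching diffeomorphisms, while the symmetry $f_{-x}=-f_{x}$ interchanges maximum and minimum, hence swaps the two disks, i.e. corresponds to passing from $\phi_{x}$ to its inverse.

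Next I would recall the Gromoll filtration in a form adapted to this dictionary. Using Cerf's identification $\Theta_{n}\cong\pi_{0}\,\mathrm{Diff}(D^{n-1},\partial)$ (legitimate for $n\geq 6$), the filtration $\Theta_{n}=\Gamma^{1}_{n}\supseteq\Gamma^{2}_{n}\supseteq\cdots$ is defined through the iterated connecting homomorphisms of the natural restriction fibrations relating the disk-diffeomorphism groups $\mathrm{Diff}(D^{n-j},\partial)$. Concretely, membership of $\Sigma^{n}$ in $\Gamma^{k}_{n}$ means that its clutching class desuspends, through these connecting maps, to a class in $\pi_{k-1}\,\mathrm{Diff}(D^{n-k},\partial)$. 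I would unwind this lift into the geometric datum I actually need: a family $\{\psi_{t}\}_{t\in D^{k-1}}$ of diffeomorphisms of a disk, supported away from and trivial near $\partial D^{k-1}$, which at a suitable base parameter clutches to $\Sigma^{n}$ and whose boundary behaviour is standard. This is exactly the input produced by the hypothesis ``Gromoll filtration $\geq k$''.

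The construction then proceeds in two moves. First, I would convert the parametrized family of clutching diffeomorphisms into a parametrized family of honest functions: inserting the diffeomorphism $\psi_{t}$ into a fixed collar between the two standard disks and interpolating radially yields, for each $t$, a Morse function on $\Sigma^{n}$ with exactly one minimum and one maximum, that is, a special generic function. Second, I would symmetrize. Doubling the $D^{k-1}$-family across its boundary --- where the family is standard --- produces an $S^{k-1}$-family, and composing the members on the two hemispheres with the height-reversing involution (which swaps $D_{-}$ and $D_{+}$, matching $\phi\mapsto\phi^{-1}$) forces the antipodal relation $f_{-x}=-f_{x}$. The technology of Stein factorization developed in the paper is what should guarantee that the interpolation can be carried out fibrewise and that the folds stay definite throughout.

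The step I expect to be the main obstacle is precisely the preservation of the special generic condition across the whole family together with the symmetry: a priori the radial interpolation could create interior folds or extra critical points of intermediate index as $t$ varies, destroying the ``only definite folds'' property, and the symmetrization must be performed without introducing degeneracies along the equator $\partial D^{k-1}$ or at the poles of $S^{k-1}$. Controlling this parametrized Morse theory --- keeping every member special generic while realizing the exact index bookkeeping that turns a lift into $\pi_{k-1}$ (a $D^{k-1}$-family rel boundary) into a genuine $S^{k-1}$-family --- is where the ``$-1$'' enters and where the real work lies.
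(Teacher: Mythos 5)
There are two genuine gaps here, and the second is fatal to your symmetrization step. First, the passage from the lifted Gromoll datum to a family of functions is not actually a construction: each $\psi_{t}$ is a diffeomorphism of an $(n-k)$-dimensional disk, so no individual $\psi_{t}$ ``clutches to $\Sigma^{n}$'' or determines a function on the fixed manifold $\Sigma^{n}$ (only the assembled family does), and ``inserting $\psi_{t}$ into a collar and interpolating radially'' does not specify a map $\Sigma^{n}\to\mathbb{R}$; you then defer the entire content of the theorem (keeping every member special generic) to ``the main obstacle''. Second, and decisively, the doubling cannot produce the relation $\eta_{-s}=-\eta_{s}$ demanded by \Cref{definition morse perfection}: your $D^{k-1}$-family is constant, equal to some base special generic function $f_{0}$, near $\partial D^{k-1}$, because the representative of the class in $\pi_{k-1}\operatorname{Diff}(D^{n-k},\partial)$ is trivial there; but on the equator of the doubled $S^{k-1}$ the antipodal relation forces $f_{0}=-f_{0}$, which no Morse function satisfies. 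Composing one hemisphere with a height-reversing involution $\tau$ of $\Sigma^{n}$ does not rescue this, since the definition requires equality of functions rather than equality up to a diffeomorphism of the source; chasing the conditions yields $f_{t}\circ\tau=f_{t}$ in the interior and $f_{0}\circ\tau=-f_{0}$ on the equator simultaneously. The antipodal symmetry cannot be grafted on at the end; it must be present from the start.

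The mechanism the paper (following Weiss) actually uses sidesteps both problems. If the Gromoll filtration is $\geq k$, the clutching diffeomorphism $\varphi_{g}$ of $S^{n-1}$ can be chosen to commute with the projection $\pi^{n}_{k-1}$ to the last $k-1$ coordinates; hence the two copies of the linear projection $\pi^{n+1}_{k}|_{V_{\pm}}\colon V_{\pm}\to\mathbb{R}^{k}$ glue to a single well-defined map $f\colon\Sigma^{n}\to\mathbb{R}^{k}$, which is a standard special generic map by \Cref{sufficient condition for standardness} (this is \Cref{MAIN THEOREM}$(i)$). The entire $S^{k-1}$-family is then obtained at once as $\eta_{u}=\pi_{u}\circ f$ for $u\in S^{k-1}$, where $\pi_{u}(v)=u\cdot v$: each $\eta_{u}$ is a special generic function by \Cref{composition of ssgm with orthogonal projections to the line are ssf}, and $\eta_{-u}=-\eta_{u}$ holds identically because $\pi_{-u}=-\pi_{u}$. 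No parametrized Morse theory, no radial interpolation, and no doubling are needed; the ``$-1$'' arises simply from passing from a map into $\mathbb{R}^{k}$ to the functions indexed by $S^{k-1}$, not from any index bookkeeping.
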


As pointed out in \cite{W}, it is possible by means of algebraic $K$-theory to derive upper bounds for the Morse perfection of certain homotopy spheres in terms of the signature of parallelizable null-cobordisms.
Consequently, \Cref{gromoll filtration and morse perfection} allows to draw conclusions on the Gromoll filtration of concrete homotopy spheres such as Milnor spheres (compare \Cref{proposition application to exotic spheres}$(ii)$).
From the point of view of differential geometry the upper bounds for the Morse perfection imply (see \cite[p. 388]{W}) that certain homotopy spheres do not admit a Riemannian metric with sectional curvature ``pinched''  in the interval $(1/4, 1]$. Note that this is precisely the pinching condition of the classical sphere theorem due to Rauch, Berger and Klingenberg. \par\medskip

The present paper focuses on \emph{standard} special generic maps (see \Cref{definition standard special generic map}) by which we roughly mean special generic maps from homotopy spheres into Euclidean spaces that factorize nicely over the closed unit ball (the required technique of \emph{Stein factorization} is recapitulated in \Cref{Stein factorization}).
According to \Cref{composition of ssgm with orthogonal projections are ssgm} a homotopy sphere that admits a standard special generic map into $\mathbb{R}^{p}$ will also admit such maps into $\mathbb{R}^{1}, \dots, \mathbb{R}^{p-1}$.
The \emph{fold perfection} of a homotopy sphere $\Sigma^{n}$ (see \Cref{definition fold perfection}) is defined to be the greatest integer $p$ for which a standard special generic map $\Sigma^{n} \rightarrow \mathbb{R}^{p}$ exists.

In refinement of \Cref{gromoll filtration and morse perfection} our main result is the following

\begin{theorem}\label{MAIN THEOREM}
If $\Sigma^{n}$ is a homotopy sphere of dimension $n \geq 7$, then
\begin{enumerate}[$(i)$]
\item $\text{Gromoll filtration of } \Sigma^{n} \leq \text{fold perfection of } \Sigma^{n}$, and
\item $(\text{fold perfection of } \Sigma^{n})-1 \leq \text{Morse perfection of } \Sigma^{n}$.
\end{enumerate}
\end{theorem}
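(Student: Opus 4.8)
The plan is to prove the two inequalities separately, each by exhibiting the geometric object whose existence the right-hand quantity measures. For $(i)$ the input is a homotopy sphere lying deep in the Gromoll filtration, out of which I want to manufacture a \emph{standard} special generic map; for $(ii)$ the input is such a map, from which I want to read off a family of special generic functions. The unifying principle is that the boundary sphere $S^{p-1}=\partial D^{p}$ of the Stein space $D^{p}$ plays simultaneously the role of the parameter sphere of a Gromoll clutching and of the parameter sphere of a Morse-perfection family.

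For $(i)$, suppose the Gromoll filtration of $\Sigma^{n}$ is $\ge p$ (\Cref{The Gromoll filtration}). Then $\Sigma^{n}$ is obtained from the trivial $(n-p)$-sphere bundle over $D^{p}$ by a clutching construction
$$\Sigma^{n}=(D^{p}\times S^{n-p})\cup_{\phi}(S^{p-1}\times D^{n-p+1}),$$
in which the gluing diffeomorphism $\phi$ of $S^{p-1}\times S^{n-p}$ has the form $\phi(\theta,x)=(\theta,\alpha_{\theta}(x))$ for a smooth family $\{\alpha_{\theta}\}_{\theta\in S^{p-1}}$ of diffeomorphisms of the fibre $S^{n-p}$, representing the given element of $\pi_{p-1}$ of the relevant diffeomorphism group. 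The plan is to transport the standard special generic map $S^{n}\to D^{p}\hookrightarrow\mathbb{R}^{p}$ (orthogonal projection, whose Stein space is the unit ball) through this twist, along the lines of Weiss' construction. Since $\phi$ covers the identity of $S^{p-1}$ and acts fibrewise, the projection to $D^{p}$ descends to $\Sigma^{n}$; over the interior of $D^{p}$ it remains a submersion with fibre $S^{n-p}$, while along $\partial D^{p}$ each fibre collapses to the cone point, which is canonical and hence compatible with any fibre diffeomorphism $\alpha_{\theta}$. I therefore obtain a special generic map $\Sigma^{n}\to\mathbb{R}^{p}$ whose singular set is mapped diffeomorphically onto $\partial D^{p}$ and whose Stein factorization is the unit ball embedded in the standard way; that is, the map is standard in the sense of \Cref{definition standard special generic map}. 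This gives fold perfection $\ge p$ (\Cref{definition fold perfection}), proving $(i)$.

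For $(ii)$, suppose the fold perfection of $\Sigma^{n}$ is $\ge p$, and let $f\colon\Sigma^{n}\to\mathbb{R}^{p}$ be a standard special generic map with Stein factorization $\Sigma^{n}\xrightarrow{q}D^{p}\xrightarrow{\bar f}\mathbb{R}^{p}$ (\Cref{Stein factorization}). For $\theta\in S^{p-1}$ set $g_{\theta}=\langle f(\cdot),\theta\rangle\colon\Sigma^{n}\to\mathbb{R}$. Because $g_{\theta}$ factors as $h_{\theta}\circ q$ with $h_{\theta}=\langle\bar f(\cdot),\theta\rangle$ a function on $D^{p}$ having no interior critical points and exactly two nondegenerate extrema on $\partial D^{p}$ (at $\pm\theta$), and because $q$ is a submersion over the interior and carries the fold locus diffeomorphically onto $\partial D^{p}$, the function $g_{\theta}$ has precisely two critical points, both definite folds; hence each $g_{\theta}$ is a special generic function. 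The family depends smoothly on $\theta$ and satisfies the antipodal symmetry $g_{-\theta}=-g_{\theta}$, so it realizes Morse perfection $\ge p-1$ (\Cref{Morse perfection}). This yields $(\text{fold perfection})-1\le\text{Morse perfection}$, proving $(ii)$.

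I expect the main obstacle to be in $(i)$: namely, making the clutching picture of the Gromoll filtration precise enough that the transported projection is genuinely smooth with only definite fold singularities, and that its Stein factorization is standard rather than merely diffeomorphic to $D^{p}$. The delicate points are the behaviour near the fold locus, where the $S^{p-1}$-family $\{\alpha_{\theta}\}$ must be shown compatible with the fibrewise collapse (which should reduce to the canonicity of the cone point together with a collar-straightening argument), and the verification that the construction respects whatever normalization \Cref{definition standard special generic map} imposes. By contrast, $(ii)$ is comparatively routine once the local normal form of a definite fold is combined with the standardness of $\bar f$.
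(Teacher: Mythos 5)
Your argument for $(i)$ rests on the claim that a homotopy sphere of Gromoll filtration $\geq p$ admits a clutching decomposition $(D^{p}\times S^{n-p})\cup_{\phi}(S^{p-1}\times D^{n-p+1})$ with $\phi$ fibered over $\operatorname{id}_{S^{p-1}}$. But the Gromoll filtration is defined in \Cref{The Gromoll filtration} by the existence of a representative $g\in\operatorname{Diff}(D^{n-1},\partial)$ commuting with a coordinate projection, and the homotopy sphere it produces is the twisted double $D^{n}\cup_{\varphi_{g}}D^{n}$ of \Cref{homotopy spheres}. The passage from that description to your clutching picture --- essentially the identification of the filtration subgroups with images of the Gromoll homomorphism from a homotopy group of $\operatorname{Diff}(D^{n-p},\partial)$, followed by the conversion of such a family into a clutching of $\partial(D^{p}\times D^{n-p+1})$ --- is a genuine theorem that you neither prove nor cite, and it is exactly where the content of $(i)$ would sit in your setup; you flag it as ``the main obstacle'' but do not close it. The paper's proof shows this reformulation is unnecessary: keeping the two-ball decomposition $\Sigma^{n}=V_{-}\cup_{C}V_{+}$, the hypothesis $\pi^{n-1}_{p}\circ g=\pi^{n-1}_{p}$ forces $\pi^{n}_{p}\circ\varphi_{g}=\pi^{n}_{p}$, so the two copies of the orthogonal projection $\pi^{n+1}_{p+1}|_{S^{n}}$ glue to a map $\Sigma^{n}\to\mathbb{R}^{p+1}$, which is automatically smooth and is standard special generic by \Cref{special generic maps on the standard sphere} and \Cref{sufficient condition for standardness}. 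Until you supply the clutching decomposition (and the smoothness of the collapsed map across the gluing locus), $(i)$ is not proved.

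In $(ii)$ there is a concrete error rather than just a gap: you set $g_{\theta}=\theta\cdot f(\cdot)$ and assert that $h_{\theta}=\theta\cdot\overline{f}(\cdot)$ has no interior critical points and exactly two nondegenerate extrema on the boundary, at $\pm\theta$. That presumes $\overline{f}$ is the standard inclusion $D^{p}\subset\mathbb{R}^{p}$, which is not part of \Cref{definition standard special generic map}: standardness only says $W_{f}$ is abstractly diffeomorphic to $D^{p}$, whereas $\overline{f}$ is merely an immersion (\Cref{stein factorization}), possibly non-injective, and even when it embeds, its boundary image is an arbitrary embedded $(p-1)$-sphere on which the height function in direction $\theta$ can fail to be Morse or can have many critical points. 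The fix is the one the paper makes: discard $\overline{f}$, choose a diffeomorphism $\iota\colon W_{f}\to D^{p}$, and set $\eta(u,x)=u\cdot\iota(q_{f}(x))$; then \Cref{composition of ssgm with orthogonal projections to the line are ssf} --- whose block-Hessian computation at the two fold points is precisely the nondegeneracy check you leave implicit --- shows each $\eta_{u}$ is a special generic function, and the antipodal symmetry gives Morse perfection $\geq p-1$. With that replacement your part $(ii)$ coincides with the paper's argument.
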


Compared to Morse perfection the notion of fold perfection has the natural algebraic advantage that it gives rise to a filtration of the group of homotopy spheres by sub\emph{groups} (see \Cref{fold filtration remark}). \par\medskip

The paper is organized as follows.
In \Cref{preliminaries} we explain in detail all relevant notions and techniques.
\Cref{Fold perfection of homotopy spheres} introduces standard special generic maps and studies some of their properties.
The proof of \Cref{MAIN THEOREM}, which will be given in \Cref{proof of main theorem}, is based on a modification of the original proof of \Cref{gromoll filtration and morse perfection}.
Finally, in \Cref{Application to exotic spheres}, we discuss various applications of \Cref{MAIN THEOREM}.
\Cref{proposition application to exotic spheres} exploits known results about the Gromoll filtration and the Morse perfection of concrete exotic spheres such as Milnor spheres to extract information about their fold perfection.
Furthermore, \Cref{milnor 7-sphere} and \Cref{gromoll corollary} focus on the impact to \Cref{study special generic maps}, which includes an answer to \Cref{study special generic maps} for the Milnor $7$-sphere.

\subsection*{Notation}
In the following $M^{n}$ will always denote a connected closed smooth manifold of dimension $n \geq 1$.
The symbol $\cong$ will either mean diffeomorphism of smooth manifolds or isomorphism of groups.
The singular locus of a smooth map $f$ between smooth manifolds will be denoted by $S(f)$.
Euclidean $p$-space $\mathbb{R}^{p}$ is always equipped with the Euclidean inner product $(u, v) \mapsto u \cdot v$, and $||u|| := \sqrt{u \cdot u}$ denotes the corresponding Euclidean norm.
Let $D^{p} = \{x \in \mathbb{R}^{p}; ||x|| \leq 1\}$ denote the closed unit ball in $\mathbb{R}^{p}$, and $S^{p-1} := \partial D^{p}$ the standard $(p-1)$-sphere.
The $m \times m$ unit matrix will be denoted by $\mathbb{I}_{m}$.
For $p_{1} > p_{2}$ let $\pi^{p_{1}}_{p_{2}} \colon \mathbb{R}^{p_{1}} \rightarrow \mathbb{R}^{p_{2}}$ denote the projection to the \emph{last} $p_{2}$ coordinates.

\subsection*{Acknowledgement}
The author is grateful to Professor Osamu Saeki for pointing out the important application to the Milnor $7$-sphere.

\section{Preliminaries}\label{preliminaries}
The purpose of the present section is to recall the concepts that play a role in the presentation of \Cref{MAIN THEOREM}.

\subsection{Special generic maps}\label{subsection special generic maps}
Consider a smooth map $f \colon M^{n} \rightarrow N^{p}$ between smooth manifolds of dimensions $n \geq p \geq 1$. A point $x \in M$ is called \emph{fold point} if there exist local coordinates $x_{1}, \dots, x_{n}$ around $x$ and $y_{1}, \dots, y_{p}$ around $y := f(x)$ in which $f$ takes the form
\begin{align*}
y_{i} \circ f &= x_{i}, \qquad i = 1, \dots, p-1, \\
y_{p} \circ f &= x_{p}^{2} + \dots + x_{p+\lambda-1}^{2} - x_{p+\lambda}^{2} - \dots - x_{n}^{2},
\end{align*}
for a suitable integer $\lambda \in \{0, \dots, n-p+1\}$.
The integer $\operatorname{max}\{\lambda, n-p+1-\lambda\}$ turns out to be independent of the choice of coordinates, and is called \emph{absolute index} of $f$ at $x$.
The map $f$ is called \emph{fold map} if every singular point of $f$ is a fold point.
It is easy to see that the singular locus $S(f)$ of a fold map $f$ is a $(p-1)$-dimensional submanifold of $M^{n}$, and that $f$ restricts to an immersion $S(f) \rightarrow N^{p}$.
The absolute index is known to be a locally constant function on $S(f)$.

\begin{definition}\label{special generic maps and functions}
The smooth map $f \colon M^{n} \rightarrow N^{p}$ is called \emph{special generic map} if its singular locus $S(f)$ consists of \emph{definite} fold points, i.e., fold points of absolute index $n-p+1$. Special generic maps into $N^{p} = \mathbb{R}$ are referred to as \emph{special generic functions}.
\end{definition}

If $M^{n}$ admits a special generic function, then $M^{n}$ is homeomorphic to $S^{n}$ by a well-known result of Reeb \cite{R}, and for $n \leq 6$ even diffeomorphic to $S^{n}$ according to the proof of \cite[(5.3.3), p. 177]{S2}.

We mention the following folkloric method that allows to check in some cases that a given smooth map is a fold map.

\begin{proposition}\label{criterion for a map to be a fold map}
Let $n \geq q \geq 1$ be integers.
Let $f \colon U \rightarrow \mathbb{R}$ be a smooth function defined on an open subset $U \subset \mathbb{R}^{q-1} \times \mathbb{R}^{n-q+1}$.
Then the smooth map
$$
F \colon U \rightarrow \mathbb{R}^{q}, \qquad z = (x, y) \mapsto F(z) = (x, f(z)),
$$
is a fold map if and only if the Hessian
$$\textbf{H}_{y}(f_{x}) := \textbf{H}_{y}(f|_{U \cap (\{x\} \times \mathbb{R}^{n-q+1})})$$
is non-degenerate for every point $z = (x, y) \in S(f) = \{z \in U| \partial_{y_{1}}f(z) = \dots = \partial_{y_{n-q+1}}f(z) = 0\}$. In this case, the absolute index of $F$ at $z = (x, y) \in S(f)$ is given by $\operatorname{max}\{\lambda, n-p+1-\lambda\}$, where $\lambda$ denotes the number of negative diagonal entries of $\textbf{H}_{y}(f_{x})$ after diagonalization.
\end{proposition}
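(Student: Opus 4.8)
The plan is to recognize the singular points of $F$ as corank-one critical points whose kernel is the fiber direction, and then to match the fold condition with non-degeneracy of the $y$-Hessian, carrying out the substantial direction via the Morse lemma with parameters. First I would write the Jacobian of $F$ at $z=(x,y)$ in block form relative to the splitting $\mathbb{R}^{q-1}\times\mathbb{R}^{n-q+1}$ of the source:
$$
D_{z}F=\begin{pmatrix}\mathbb{I}_{q-1} & 0\\ \partial_{x}f(z) & \partial_{y}f(z)\end{pmatrix}.
$$
As the upper-left block is invertible, $D_{z}F$ has maximal rank $q$ exactly when $\partial_{y}f(z)\neq 0$; hence the singular locus of $F$ coincides with the set $S(f)=\{z\in U\mid \partial_{y_{1}}f(z)=\dots=\partial_{y_{n-q+1}}f(z)=0\}$ of the statement, each singular point has corank one, and there $\ker D_{z}F=\{0\}\times\mathbb{R}^{n-q+1}$ is the fiber direction.

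For the implication that non-degeneracy of $\mathbf{H}_{y}(f_{x})$ makes $F$ a fold map, I would fix $z_{0}=(x_{0},y_{0})\in S(f)$ and regard $x$ as a parameter. Since $\partial_{y}f(z_{0})=0$ and $\mathbf{H}_{y}(f_{x_{0}})$ is non-degenerate, the implicit function theorem yields a smooth section $x\mapsto(x,\eta(x))$ of $S(f)$ through $z_{0}$, with $\eta(x)$ the unique critical point of $f_{x}$ near $y_{0}$. The Morse lemma with parameters then provides a smoothly $x$-dependent family of diffeomorphisms $\tilde{y}\mapsto\varphi_{x}(\tilde{y})$, $\varphi_{x}(0)=\eta(x)$, with
$$
f(x,\varphi_{x}(\tilde{y}))=g(x)+Q(\tilde{y}),\qquad Q(\tilde{y})=-\tilde{y}_{1}^{2}-\dots-\tilde{y}_{\lambda}^{2}+\tilde{y}_{\lambda+1}^{2}+\dots+\tilde{y}_{n-q+1}^{2},
$$
where $g(x):=f(x,\eta(x))$ and $\lambda$ is the number of negative eigenvalues of $\mathbf{H}_{y}(f_{x_{0}})$. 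Taking $(x,\tilde{y})$ as source coordinates and applying the target diffeomorphism $(x,t)\mapsto(x,t-g(x))$ turns $F$ into $(x,\tilde{y})\mapsto(x,Q(\tilde{y}))$, which is precisely the fold normal form; thus $z_{0}$ is a fold point of absolute index $\operatorname{max}\{\lambda,\,n-q+1-\lambda\}$, and the index formula drops out.

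For the converse I would invoke that the intrinsic second derivative $D^{2}_{z_{0}}F\colon \ker D_{z_{0}}F\times\ker D_{z_{0}}F\to\operatorname{coker}D_{z_{0}}F$ is a coordinate-free invariant of the critical point. Read in fold coordinates it is the Hessian of $Q$, which is non-degenerate; computing the same invariant in the given coordinates—where $\ker D_{z_{0}}F$ is the fiber direction and $\operatorname{coker}D_{z_{0}}F$ is trivialized by the last target coordinate—returns exactly $\mathbf{H}_{y}(f_{x_{0}})$, since the first $q-1$ components of $F$ are linear and contribute no second-order terms. Hence $F$ being a fold map forces $\mathbf{H}_{y}(f_{x_{0}})$ to be non-degenerate at every point of $S(f)$. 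This same identification, read in reverse, re-establishes the first implication and again yields the absolute index $\operatorname{max}\{\lambda,\,n-q+1-\lambda\}$, as this quantity is insensitive to the sign ambiguity in trivializing the one-dimensional cokernel.

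The step demanding the most care is the Morse lemma with parameters: one must guarantee that the diagonalizing change of the $y$-variables can be carried out smoothly in the parameter $x$ and—crucially—leaves the source product structure intact, so that the outcome is the genuine fold model $(x,\tilde{y})\mapsto(x,Q(\tilde{y}))$ rather than a merely fiberwise-quadratic form, all while tracking the signature of $\mathbf{H}_{y}(f_{x})$ so as to pin down the absolute index.
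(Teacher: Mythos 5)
Your argument is correct, but it is genuinely more than what the paper offers: the paper's ``proof'' of this proposition is a two-line pointer to Propositions 3.3.4 and 4.5.3 of the author's thesis, whereas you supply a complete self-contained derivation. Your three ingredients are all sound and standard: the block Jacobian computation correctly identifies $S(F)$ with the zero set of $\partial_{y}f$ and shows each singular point has corank one with kernel $0\times\mathbb{R}^{n-q+1}$; the implicit function theorem plus the parametrized Morse lemma gives the fold normal form $(x,\tilde{y})\mapsto(x,Q(\tilde{y}))$ after the target shear $(x,t)\mapsto(x,t-g(x))$, which yields the ``if'' direction and the index formula; and the converse via the intrinsic second derivative $\ker D_{z_0}F\times\ker D_{z_0}F\to\operatorname{coker}D_{z_0}F$ is the right invariant argument --- computing it in the given coordinates with the cokernel trivialized by a functional vanishing on $\operatorname{im}D_{z_0}F$ (namely $(w,t)\mapsto t-\partial_{x}f(z_0)\cdot w$) returns exactly $\mathbf{H}_{y}(f_{x_0})$, since the correction term is linear. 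What your route buys is transparency and independence from the external reference; what it costs is that the two nontrivial black boxes (smooth dependence in the Morse lemma with parameters, and well-definedness of the intrinsic second derivative up to the sign ambiguity of the one-dimensional cokernel) must themselves be trusted or proved, which is presumably the content the cited thesis propositions package up. Note also that you silently correct the statement's typo $n-p+1-\lambda$ to $n-q+1-\lambda$, which is the intended reading.
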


\begin{proof}
The result can be obtained by combining \cite[Proposition 3.3.4, p. 58]{W1} and \cite[Proposition 4.5.3, p. 103]{W1}.
\end{proof}

\begin{example}\label{special generic maps on the standard sphere}
The standard sphere $S^{n}$ admits a special generic map into $\mathbb{R}^{p}$ for all $p \in \{1, \dots, n\}$. In fact, the composition of the inclusion $S^{n} \subset \mathbb{R}^{n+1}$ with any orthogonal projection $\mathbb{R}^{n+1} \rightarrow \mathbb{R}^{p}$ is easily seen to be a special generic map.
\end{example}

\subsection{Stein factorization}\label{Stein factorization}

Special generic maps have first been studied by Burlet and de Rham in \cite{BdR} by means of the technique of Stein factorization, which has ever since played an essential role in the study of special generic maps, see e.g. \cite{S1}.

Let us recall the notion of Stein factorization of an arbitrary continuous map $f \colon X \rightarrow Y$ between topological spaces. Define an equivalence relation $\sim_{f}$ on $X$ as follows. Two points $x_{1}, x_{2} \in X$ are called equivalent, $x_{1} \sim_{f} x_{2}$, if they are mapped by $f$ to the same point $y := f(x_{1}) = f(x_{2}) \in Y$, and lie in the same connected component of $f^{-1}(y)$. The quotient map $q_{f} \colon X \rightarrow X/\sim_{f}$ gives rise to a unique set-theoretic factorization of $f$ of the form

\begin{center}
\begin{tikzpicture}
\path (0,0) node(a) {$X$}
         (3, 0) node(b) {$Y$}
         (0, -2) node(c) {$X/\sim_{f}$};
\draw[->] (a) -- node[above] {$f$} (b);
\draw[->] (a) -- node[left] {$q_{f}$} (c);
\draw[->] (c) -- node[below] {$\overline{f}$} (b);
\end{tikzpicture}.
\end{center}
If we equip $W_{f} := X/\sim_{f}$ with the quotient topology induced by the surjective map $q_{f} \colon X \rightarrow W_{f}$, then it follows that the maps $q_{f}$ and $\overline{f}$ are continuous. Then, the above diagram (and sometimes $W_{f}$ itself) is called the \emph{Stein factorization} of $f$.

For our purposes, the relevant properties of the Stein factorization of a special generic map can be summarized according to \cite[p. 267]{S1} in the following

\begin{proposition}\label{stein factorization}
Let $f \colon M^{n} \rightarrow \mathbb{R}^{p}$ be a special generic map where $p < n$. Then,
\begin{enumerate}[$(i)$]
\item the quotient space $W_{f} = M^{n}/\sim_{f}$ can be equipped with the structure of a smooth $p$-dimensional manifold with boundary in such a way that $q_{f} \colon M^{n} \rightarrow W_{f}$ is a smooth map satisfying $q_{f}^{-1}(\partial W_{f}) = S(f)$, and $\overline{f} \colon W_{f} \rightarrow \mathbb{R}^{p}$ is an immersion.
\item the quotient map $q_{f}$ restricts to a diffeomorphism $S(f) \stackrel{\cong}{\longrightarrow} \partial W_{f}$.
\end{enumerate}
\end{proposition}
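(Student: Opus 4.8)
The plan is to reduce everything to two local normal forms for $f$—one at regular points and one at the definite fold points—and then to assemble these into a smooth atlas on $W_f$. Since $M^n$ is closed, $f$ is proper, so every fiber $f^{-1}(y)$ is compact with finitely many connected components; this properness is what makes the quotient topology on $W_f$ well behaved and guarantees that $q_f$ is a closed quotient map, and it will be invoked when identifying topologies below. Near a regular point $x$, the submersion theorem supplies coordinates in which $f$ is the projection $\mathbb{R}^{p}\times\mathbb{R}^{n-p}\to\mathbb{R}^{p}$; the component of the fiber through $x$ is then a slice $\{u\}\times\mathbb{R}^{n-p}$, so locally $q_f$ is this projection, the local quotient is an open set $V\subset\mathbb{R}^{p}$, and $\overline f$ is the identity on $V$. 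Near a definite fold point $x$ the hypothesis that the absolute index equals $n-p+1$ forces the normal form $y_i\circ f=x_i$ for $i\le p-1$ and $y_p\circ f=x_p^2+\dots+x_n^2$; writing $\xi=(x_p,\dots,x_n)\in\mathbb{R}^{n-p+1}$, this reads $f(x_1,\dots,x_{p-1},\xi)=(x_1,\dots,x_{p-1},\|\xi\|^2)$. For fixed $(x_1,\dots,x_{p-1})$ the local fibers are the concentric spheres $\|\xi\|^2=t$ (a single point when $t=0$), each connected because $n-p\ge 1$ (here $p<n$ is used); hence $\sim_f$ collapses each such sphere, and the local quotient is identified via the coordinates $(x_1,\dots,x_{p-1},t)$ with the half-space $\mathbb{R}^{p-1}\times[0,\infty)$. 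In this chart $q_f$ is the smooth map $(x,\xi)\mapsto(x,\|\xi\|^2)$ and $\overline f$ is the inclusion $(x,t)\mapsto(x,t)$ of the half-space into $\mathbb{R}^p$, which is an embedding; this already exhibits $W_f$ near its would-be boundary as a manifold with boundary and shows $\overline f$ is an immersion there.

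Next I would verify that these charts endow $W_f$ with a well-defined smooth $p$-manifold-with-boundary structure. The points to check are that the charts cover $W_f$, that the quotient topology agrees with the topology induced by the charts, and that transition maps on overlaps are smooth. For the topological statement I would use compactness of the fibers and properness of $q_f$ to rule out non-Hausdorff behavior and to see that $q_f$ is open onto each chart image. Smoothness of transitions follows because on any overlap both charts arise from the single smooth map $f$ by collapsing connected fiber components, so a transition map is pinned down by $\overline f$ together with the explicit formulas above; in particular the interior charts are compatible with the boundary charts because in a fold chart $\overline f$ restricted to $t>0$ is a diffeomorphism onto an open subset of $\mathbb{R}^{p}$. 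Once the smooth structure is in place, smoothness of $q_f$ and the immersion property of $\overline f$ hold since both were verified chart by chart, and the set-theoretic identity $f=\overline f\circ q_f$ persists.

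For the boundary statements I observe that a point of $W_f$ is a boundary point precisely when it has the form $(x,0)$ in a fold chart, i.e.\ when it is the image of a fold point; conversely the fold normal form sends every fold point into $\partial W_f$, which yields $q_f^{-1}(\partial W_f)=S(f)$. For $(ii)$ the crucial observation is that in the fold normal form the fiber through a fold point is the single point $\{\xi=0\}$, so each fold point is isolated in its fiber and hence forms a connected fiber component by itself; consequently $q_f$ separates distinct fold points and restricts to a bijection $S(f)\to\partial W_f$. In a fold chart this restriction is the identity $x\mapsto(x,0)$, hence a local diffeomorphism onto $\partial W_f$, and a bijective local diffeomorphism is a diffeomorphism.

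The step I expect to be the main obstacle is the middle one: checking that the locally defined charts genuinely assemble into a Hausdorff smooth manifold with boundary whose topology \emph{is} the quotient topology, and that the interior and boundary charts are smoothly compatible. The local normal forms are routine, but making the gluing and the comparison of topologies precise—ensuring, for instance, that two nearby fiber components are never collapsed to the same point and that $q_f$ restricts to a genuine quotient map onto each chart—is where the compactness of $M^n$ and the properness of $f$ must be invoked with care.
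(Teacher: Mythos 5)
The paper itself does not prove this proposition: it is recalled verbatim from Saeki \cite[p.~267]{S1} (the construction goes back to Burlet--de Rham \cite{BdR}), so there is no in-paper argument to compare against. Your sketch is essentially the standard proof of that cited result --- local normal forms at regular and at definite fold points, assembled into interior and half-space charts on $W_f$, with properness of $f$ controlling the quotient topology --- and its overall structure is sound; you have also correctly isolated where the real work lies. One point is imprecise as written: near a regular point the connected component of the \emph{global} fiber is not the slice $\{u\}\times\mathbb{R}^{n-p}$ of your submersion chart (it is a closed $(n-p)$-manifold, which may meet that chart in several slices), so the point-local chart does not by itself descend to a chart on $W_f$; two slices of one chart can be globally $\sim_f$-equivalent. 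The standard repair, which is exactly the step you flag as the main obstacle, is to work one fiber component $L$ at a time: compactness of $f^{-1}(y)$ separates its components, properness of $f$ yields a $\sim_f$-saturated neighborhood of $L$ in which local fiber components coincide with global ones, and on that neighborhood Ehresmann's fibration theorem (for a regular component $L$) exhibits $f$ as a trivial bundle $L\times V\to V$ with connected fibers, so the local quotient is the open set $V$; for a component meeting $S(f)$, the definite fold normal form shows $L$ is a single fold point and the saturated neighborhood is the fold chart, giving the half-space chart exactly as you describe. With that substitution the rest of your argument --- smooth compatibility of the charts, $q_f^{-1}(\partial W_f)=S(f)$, $\overline f$ an immersion, and part $(ii)$ via a bijective local diffeomorphism $S(f)\to\partial W_f$ --- goes through.
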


\subsection{Morse perfection}\label{Morse perfection}
The concept of Morse perfection has been introduced in \cite[Definition 0.1, p. 387]{W}.

\begin{definition}\label{definition morse perfection}
The \emph{Morse perfection} of $M^{n}$ is the greatest integer $k \geq -1$ for which there exists a smooth map $\eta \colon S^{k} \times M^{n} \rightarrow \mathbb{R}$ with the following properties:
\begin{enumerate}[$(1)$]
\item $\eta$ restricts for every $s \in S^{k}$ to a special generic function (see \Cref{special generic maps and functions})
$$\eta_{s} \colon M^{n} \rightarrow \mathbb{R}, \qquad \eta_{s}(x) = \eta(s, x).$$
\item $\eta_{-s} = -\eta_{s}$ for all $s \in S^{k}$.
\end{enumerate}
\end{definition}

Note that every $M^{n}$ has Morse perfection $\geq -1$.
(In fact, $(1)$ and $(2)$ are empty conditions for $k=-1$ because $S^{-1} = \emptyset$ by convention.)

As pointed out in \cite{W}, the Morse perfection of $M^{n}$ is always $\leq n$, and the standard sphere $S^{n}$ has Morse perfection $n$.

\subsection{Homotopy spheres}\label{homotopy spheres}
Let $\Theta_{n}$ denote the group of $h$-cobordism classes of oriented homotopy $n$-spheres as introduced in \cite{KM}.
It is known that $\Theta_{n}$ is trivial for $n \leq 6$.
(For $n = 3$ this follows from the classical Poincar\'{e} conjecture proven by Perelman.)
For $n \geq 5$ any homotopy $n$-sphere is homeomorphic to $S^{n}$, and the equivalence relation of $h$-cobordism coincides with that of orientation preserving diffeomorphism.
This interpretation of $\Theta_{n}$ will be understood in the following.

Consider the group $\Gamma^{n} = \pi_{0}(\operatorname{Diff}(D^{n-1}, \partial))$ of isotopy classes of orientation preserving diffeomorphisms of $D^{n-1}$ that are the identity near the boundary.
For $n \geq 7$ a group isomorphism
$$\Sigma \colon \Gamma^{n} \stackrel{\cong}{\longrightarrow} \Theta_{n}$$
due to Smale and Cerf (see \cite{CS}) can explicitly be described as follows.
Let $\iota_{\pm} \colon D^{n-1} \rightarrow S^{n-1}$ denote the embedding $x \mapsto \iota_{\pm}(x) = (\pm\sqrt{1-||x||^{2}}, x)$.
A representative $g \colon D^{n-1} \stackrel{\cong}{\longrightarrow} D^{n-1}$ of a given $\gamma \in \Gamma^{n}$ induces a diffeomorphism $\varphi_{g} \colon S^{n-1} \stackrel{\cong}{\longrightarrow} S^{n-1}$ uniquely determined by requiring that $\varphi_{g} \circ \iota_{-} = \iota_{-} \circ g$ and $\varphi_{g} \circ \iota_{+} = \iota_{+}$.
(In particular, $\varphi_{g}$ agrees with the identity map in a neighborhood of the hemisphere of $S^{n-1}$ with non-negative first component.)
Then, $\Sigma(\gamma) \in \Theta_{n}$ is represented by the homotopy $n$-sphere that is obtained by identifying two copies of $D^{n}$ along their boundaries via $\varphi_{g}$.

\subsection{The Gromoll filtration}\label{The Gromoll filtration}
Let $n \geq 7$ be an integer.
There is a filtration of the group $\Gamma^{n} = \pi_{0}(\operatorname{Diff}(D^{n-1}, \partial))$ due to Gromoll \cite{G} by subgroups of the form
$$
0 = \Gamma^{n}_{n-1} \subset \dots \subset \Gamma^{n}_{1} = \Gamma^{n}.
$$
By definition, $\gamma \in \Gamma^{n}_{p+1}$ if $\gamma$ can be represented by a diffeomorphism $g \in \operatorname{Diff}(D^{n-1}, \partial)$ such that the following diagram commutes.
\begin{center}
\begin{tikzpicture}
\path (0,0) node(a) {$D^{n-1}$}
         (4, 0) node(b) {$D^{n-1}$}
         (0, -1) node(c) {$\mathbb{R}^{p}$}
         (4, -1) node(d) {$\mathbb{R}^{p}$};
\draw[->] (a) -- node[above] {$g$} (b);
\draw[->] (c) -- node[above] {$\operatorname{id}_{\mathbb{R}^{p}}$} (d);
\draw[->] (a) -- node[left] {$\pi^{n-1}_{p}|_{D^{n-1}}$} (c);
\draw[->] (b) -- node[right] {$\pi^{n-1}_{p}|_{D^{n-1}}$} (d);
\end{tikzpicture}
\end{center}
One says that $\gamma \in \Gamma^{n}$ (or $\Sigma(\gamma) \in \Theta_{n}$) has Gromoll filtration $p$ if $\gamma \in \Gamma^{n}_{p} \setminus \Gamma^{n}_{p+1}$.

\section{Fold perfection of homotopy spheres}\label{Fold perfection of homotopy spheres}

The following result is due to Saeki \cite[Proposition 4.1, p. 274]{S1}. 

\begin{proposition}\label{contractibility of stein factorization}
Let $f \colon M^{n} \rightarrow \mathbb{R}^{p}$ ($p < n$) be a special generic map.
Then $M^{n}$ is a homotopy sphere if and only if the Stein factorization $W_{f}$ is contractible.
\end{proposition}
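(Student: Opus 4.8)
The plan is to reduce the statement to a computation about the boundary of a disk bundle. First I would unwind the structure of $q_{f}$ from the definite-fold normal form together with \Cref{stein factorization}: over $\operatorname{int} W_{f}$ the map $q_{f}$ is a smooth bundle with fibre $S^{n-p}$ (the regular fibres), while along a collar of $\partial W_{f}$ each such sphere collapses radially onto the point of $S(f)=q_{f}^{-1}(\partial W_{f})$ lying over it. Packaging this, there is a rank-$(n-p+1)$ vector bundle $\eta \to W_{f}$ with $M \cong \partial D(\eta)$, and the zero section realizes a homotopy equivalence $D(\eta)\simeq W_{f}$. Thus $M$ bounds the compact $(n+1)$-manifold $V:=D(\eta)$, which deformation retracts onto $W_{f}$; since $\overline{f}$ is a codimension-$0$ immersion, $TW_{f}\cong \overline{f}^{*}T\mathbb{R}^{p}$ is trivial, so I may also arrange everything to be oriented. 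This reduces the proposition to: $\partial V$ is a homotopy $n$-sphere if and only if $V$ (equivalently $W_{f}$) is contractible.

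For the implication ``$W_{f}$ contractible $\Rightarrow M$ homotopy sphere'' I would argue directly. Contractibility of $W_{f}$ trivializes $\eta$, so $V=W_{f}\times D^{n-p+1}$ is contractible and $M = (W_{f}\times S^{n-p})\cup_{\partial W_{f}\times S^{n-p}}(\partial W_{f}\times D^{n-p+1})$. Lefschetz duality for the contractible manifold $V$ with $\partial V=M$ immediately gives that $M$ is a homology $n$-sphere. Simple connectivity I would obtain from van Kampen applied to this union: over the contractible base the image of $\pi_{1}(\partial W_{f})$ dies on the $W_{f}\times S^{n-p}$ side, while the capping disk bundle kills the fibre class, so $\pi_{1}(M)=0$ in every fibre dimension $n-p\geq 1$. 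Hurewicz and Whitehead then upgrade ``simply connected homology sphere'' to ``homotopy sphere''.

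For the converse I would first note that $q_{f}$ is $\pi_{1}$-surjective — any loop can be pushed into $\operatorname{int} W_{f}$ and lifted through the sphere bundle — so $\pi_{1}(W_{f})=0$. For homology I would feed the homotopy-sphere hypothesis into the long exact sequence of $(V,M)$, using $H_{*}(V)\cong H_{*}(W_{f})$ and the Lefschetz isomorphism $H_{k}(V,M)\cong H^{n+1-k}(W_{f})\cong H_{k-(n-p+1)}(W_{f},\partial W_{f})$; the connecting homomorphism is cap product with the Euler class $e(\eta)$. When $e(\eta)$ acts trivially the sequence splits into $H_{k}(M;F)\cong H_{k}(W_{f};F)\oplus H_{n-k}(W_{f};F)$ over every field $F$, whence $H_{k}(M)=0$ for $0<k<n$ forces $\widetilde{H}_{*}(W_{f};F)=0$; combined with $\pi_{1}(W_{f})=0$ and finite CW type, Hurewicz and Whitehead give that $W_{f}$ is contractible.

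The hard part is exactly the Euler-class term. The splitting above is automatic in the stable range $2p\leq n+1$, where $e(\eta)\in H^{n-p+1}(W_{f})$ vanishes for dimensional reasons and formal Poincar\'e--Lefschetz duality already pins down all Betti numbers of $W_{f}$; but for large $p$ the class $e(\eta)$ can a priori be nonzero, and duality alone only constrains $H_{*}(W_{f})$ outside the middle band $n+2-p\leq k\leq p-1$, which is self-dual under $k\leftrightarrow n+1-k$. Removing this middle homology is where one must exploit the finer geometry of the sphere bundle $S(\eta)\to W_{f}$ coming from the special generic structure, rather than formal duality — I expect this to be the technical heart of the argument.
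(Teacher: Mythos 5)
First, note that the paper does not actually prove this proposition: it is quoted verbatim from Saeki \cite[Proposition 4.1]{S1}, so there is no in-paper argument to compare yours against. Judged on its own terms, your forward implication (``$W_{f}$ contractible $\Rightarrow M$ homotopy sphere'') is essentially sound: it rests on Saeki's structure theorem that $M$ is the union of an $S^{n-p}$-bundle over the complement of a collar with a $D^{n-p+1}$-bundle over $\partial W_{f}$, and over a contractible base all of these bundles trivialize (note only that Saeki obtains a smooth $S^{n-p}$-bundle with structure group $\operatorname{Diff}(S^{n-p})$, not a priori a linear one, so ``vector bundle $\eta$'' is a slight overstatement --- harmless here). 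The Lefschetz-duality computation of $H_{*}(M)$, the van Kampen pushout of $G \leftarrow G\times H \rightarrow H$ being trivial, and the Hurewicz--Whitehead upgrade are all correct, including in the edge case $n-p=1$.

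The converse, however, contains a genuine gap, and it is precisely the one you flag in your last paragraph. Surjectivity of $q_{f}$ on $\pi_{1}$ plus the duality bookkeeping you set up (equivalently, the $(n+1-p)$-connectivity of the pair $(V,M)$ together with the fact that $W_{f}$ has the homotopy type of a complex of dimension $\leq p-1$) only yields $\widetilde{H}_{k}(W_{f})=0$ for $k\leq n+1-p$ and for $k\geq p$. The remaining band $n+2-p\leq k\leq p-1$ is nonempty as soon as $2p\geq n+3$ (already for $n=7$, $p=5$), it is carried into itself by the duality $k\leftrightarrow n+1-k$, and the intersection pairing on the capped-off Poincar\'e space corresponds under the Thom isomorphism to cup product with the Euler class of the normal data, so it cannot be shown degenerate by formal means. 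Your proposal correctly diagnoses this as the technical heart of the statement but does not supply the argument that kills the band; as written, the ``only if'' direction is therefore established only in the stable range $2p\leq n+2$, and the proposition in full generality still has to be taken from \cite{S1}.
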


\begin{corollary}\label{homology sphere as singular locus}
If $f \colon \Sigma^{n} \rightarrow \mathbb{R}^{p}$ ($p < n$) is a special generic map on a homotopy sphere $\Sigma^{n}$, then the singular locus $S(f)$ is a homology $(p-1)$-sphere.
\end{corollary}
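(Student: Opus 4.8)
The plan is to move the question from $S(f)$ to the boundary of the Stein factorization and then exploit the classical fact that the boundary of a compact contractible manifold is a homology sphere. By \Cref{stein factorization}$(ii)$ the quotient map restricts to a diffeomorphism $S(f) \stackrel{\cong}{\longrightarrow} \partial W_{f}$, so it suffices to compute the integral homology of $\partial W_{f}$. Here $W_{f}$ is a \emph{compact} smooth $p$-manifold with boundary: it is the image $q_{f}(\Sigma^{n})$ of the compact manifold $\Sigma^{n}$ under the continuous surjection $q_{f}$, hence compact, and it carries the manifold structure furnished by \Cref{stein factorization}$(i)$. Moreover, since $\Sigma^{n}$ is a homotopy sphere, \Cref{contractibility of stein factorization} guarantees that $W_{f}$ is contractible.

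First I would record that $W_{f}$, being contractible, is in particular simply connected and therefore orientable (equivalently $w_{1}(W_{f}) \in H^{1}(W_{f}; \mathbb{Z}/2) = 0$). Consequently $\partial W_{f}$ is a closed orientable $(p-1)$-manifold, and Poincar\'{e}--Lefschetz duality is available with integer coefficients in the form $H_{k}(W_{f}, \partial W_{f}) \cong H^{p-k}(W_{f})$. Because $W_{f}$ is contractible, $H^{j}(W_{f}) = 0$ for $j > 0$ and $H^{0}(W_{f}) = \mathbb{Z}$, so the relative groups collapse to $H_{p}(W_{f}, \partial W_{f}) \cong \mathbb{Z}$ and $H_{k}(W_{f}, \partial W_{f}) = 0$ for $k \neq p$.

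Next I would feed this into the long exact homology sequence of the pair $(W_{f}, \partial W_{f})$, using $H_{k}(W_{f}) = 0$ for $k \geq 1$ and $H_{0}(W_{f}) = \mathbb{Z}$. For $1 \leq k \leq p-2$ the two relative groups flanking $H_{k}(\partial W_{f})$ both vanish, forcing $H_{k}(\partial W_{f}) = 0$. The segment $H_{p}(W_{f}) \to H_{p}(W_{f}, \partial W_{f}) \to H_{p-1}(\partial W_{f}) \to H_{p-1}(W_{f})$ reads $0 \to \mathbb{Z} \to H_{p-1}(\partial W_{f}) \to 0$, giving $H_{p-1}(\partial W_{f}) \cong \mathbb{Z}$; and the bottom of the sequence, with $H_{1}(W_{f}, \partial W_{f}) = 0 = H_{0}(W_{f}, \partial W_{f})$, identifies $H_{0}(\partial W_{f}) \cong H_{0}(W_{f}) = \mathbb{Z}$, so in particular $\partial W_{f}$ is connected and nonempty. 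Since $\partial W_{f}$ has dimension $p-1$, all homology above degree $p-1$ vanishes automatically. Thus $\partial W_{f}$, and hence $S(f)$, has precisely the integral homology of $S^{p-1}$. The degenerate case $p = 1$ is handled separately: a compact contractible $1$-manifold is an interval, whose boundary is $S^{0}$.

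I do not expect a genuine obstacle here, as the argument merely packages \Cref{stein factorization} and \Cref{contractibility of stein factorization} together with standard duality. The only points that require care are verifying orientability so that the integral form of Poincar\'{e}--Lefschetz duality applies, and checking that boundary connectedness ($H_{0}(\partial W_{f}) \cong \mathbb{Z}$) genuinely emerges from the tail of the exact sequence rather than being tacitly assumed.
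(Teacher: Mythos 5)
Your proposal is correct and follows essentially the same route as the paper: transport the problem to $\partial W_{f}$ via \Cref{stein factorization}$(ii)$, invoke \Cref{contractibility of stein factorization} to get contractibility of $W_{f}$, and then combine Poincar\'{e}--Lefschetz duality with the long exact sequence of the pair $(W_{f}, \partial W_{f})$. The extra details you supply (orientability from simple connectivity, the explicit degree-by-degree computation, and the $p=1$ case) are all sound and merely flesh out what the paper leaves implicit.
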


\begin{proof}
Note that $q_{f}$ restricts by \Cref{stein factorization}$(ii)$ to a diffeomorphism $S(f) \cong \partial W_{f}$.
Moreover, \Cref{contractibility of stein factorization} implies that $W_{f}$ is contractible.
Hence, the claim follows from Poincar\'{e} duality, $H_{\ast}(W_{f}, \partial W_{f}; \mathbb{Z}) \cong H^{n-\ast}(W_{f}; \mathbb{Z})$ as well as the long exact sequence for reduced integral homology of the pair $(W_{f}, \partial W_{f})$.
\end{proof}

However, in the situation of \Cref{homology sphere as singular locus}, $W_{f}$ is in general not diffeomorphic to $D^{p}$ because $S(f) \cong \partial W_{f}$ is in general not even a homotopy sphere as remarked in \cite[Remark 4.4, p. 275]{S1}.

\begin{definition}\label{definition standard special generic map}
A special generic map $f \colon \Sigma^{n} \rightarrow \mathbb{R}^{p}$ ($p < n$) on a homotopy sphere $\Sigma^{n}$ is called \emph{standard} if the Stein factorization $W_{f}$ is diffeomorphic to $D^{p}$.
\end{definition}

It is convenient to clarify in dependence of the dimension $p$ what it means for a special generic map to be standard.

\begin{proposition}\label{characterization of standard special generic maps}
Suppose that $f \colon \Sigma^{n} \rightarrow \mathbb{R}^{p}$ ($p<n$) is a special generic map on a homotopy sphere $\Sigma^{n}$.
Then the following statements hold:
\begin{enumerate}[$(a)$]
\item If $p \in \{1, 2, 3\}$, then $f$ is a standard special generic map.
\item For $6 \leq p < n$ the following statements are equivalent:
\begin{enumerate}[$(i)$]
\item The map $f$ is a standard special generic map.
\item The singular locus $S(f)$ is diffeomorphic to $S^{p-1}$.
\item The singular locus $S(f)$ is simply connected.
\end{enumerate}
\item For $p = 5$ the equivalence $(i) \Leftrightarrow (ii)$ from part $(b)$ is valid.
\end{enumerate}
\end{proposition}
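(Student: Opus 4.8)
The plan is to transfer everything to the Stein factorization $W_f$, using that by \Cref{contractibility of stein factorization} it is a compact contractible smooth $p$-manifold with boundary, that by \Cref{stein factorization}$(ii)$ its boundary satisfies $\partial W_f \cong S(f)$, and that by \Cref{homology sphere as singular locus} this boundary is a homology $(p-1)$-sphere. Since $f$ is standard precisely when $W_f \cong D^p$, each claim amounts to recognizing when such a $W_f$ is a disk. In part $(b)$ I would prove the cycle $(iii)\Rightarrow(i)\Rightarrow(ii)\Rightarrow(iii)$, so that the only real work is the implication $(iii)\Rightarrow(i)$: the step $(i)\Rightarrow(ii)$ follows by restricting a diffeomorphism $W_f\cong D^p$ to boundaries, and $(ii)\Rightarrow(iii)$ holds because $S^{p-1}$ is simply connected for $p\ge 3$.

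For part $(a)$ I would treat the three dimensions separately by classification. For $p=1$, $W_f$ is a compact connected contractible $1$-manifold with nonempty boundary, hence $W_f\cong[0,1]=D^1$. For $p=2$, it is a compact connected simply connected surface with nonempty boundary, which the classification of surfaces forces to be $D^2$. For $p=3$, the boundary $\partial W_f$ is a homology $2$-sphere, hence a closed surface with the homology of $S^2$ and therefore $\partial W_f\cong S^2$; gluing a $3$-disk to $W_f$ along $\partial W_f$ yields a closed simply connected homology $3$-sphere, i.e.\ a homotopy $3$-sphere, which is diffeomorphic to $S^3$ by the Poincar\'{e} conjecture, whence $W_f\cong S^3\setminus\operatorname{int}D^3\cong D^3$.

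For $(iii)\Rightarrow(i)$ in part $(b)$ I would run the $h$-cobordism argument. Under $(iii)$ the boundary $\partial W_f$ is a simply connected homology $(p-1)$-sphere, hence a homotopy $(p-1)$-sphere by Hurewicz and Whitehead. Deleting an open disk $\operatorname{int}D$ from the interior produces a cobordism $V:=W_f\setminus\operatorname{int}D$ between $\partial W_f$ and $S^{p-1}=\partial D$. A van Kampen argument shows $V$ is simply connected, and Mayer--Vietoris for $W_f=V\cup D$ shows that $S^{p-1}\hookrightarrow V$ is a homology isomorphism, hence a homotopy equivalence; a Lefschetz duality computation giving $H_k(V,\partial W_f)\cong H^{p-k}(V,S^{p-1})=0$ for all $k$ shows the same for $\partial W_f\hookrightarrow V$. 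Thus $V$ is a simply connected $h$-cobordism with $\dim V=p\ge 6$, so Smale's $h$-cobordism theorem yields $V\cong S^{p-1}\times[0,1]$; in particular $\partial W_f\cong S^{p-1}$ and $W_f=V\cup D\cong D^p$. The assumption $p\ge 6$ is used exactly here, since the theorem requires the boundary dimension $p-1$ to be at least $5$.

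Part $(c)$ is where the main obstacle lies. For $p=5$ the cobordism $V$ has $4$-dimensional ends, so the $h$-cobordism theorem is unavailable and the detour through $(iii)$ collapses: a simply connected homology $4$-sphere is merely a homotopy $4$-sphere, and whether it must be diffeomorphic to $S^4$ is the open smooth $4$-dimensional Poincar\'{e} conjecture --- this is exactly why part $(c)$ drops condition $(iii)$. To obtain $(ii)\Rightarrow(i)$ I would instead cap off: from $S(f)\cong\partial W_f\cong S^4$ glue a $5$-disk to $W_f$ along its boundary to form a closed simply connected homology $5$-sphere, i.e.\ a homotopy $5$-sphere $M$; since $\Theta_5$ is trivial (see \Cref{homotopy spheres}), $M\cong S^5$, and removing the glued disk gives $W_f\cong S^5\setminus\operatorname{int}D^5\cong D^5$, using that any two smoothly embedded disks in a connected manifold are ambient isotopic. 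The implication $(i)\Rightarrow(ii)$ is again immediate.
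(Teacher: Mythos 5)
Your proof is correct and follows the same strategy as the paper: identify standardness of $f$ with $W_f \cong D^p$ via \Cref{contractibility of stein factorization}, \Cref{homology sphere as singular locus} and \Cref{stein factorization}$(ii)$, and then apply a disk-recognition result in each dimension range. The only difference is that where the paper simply cites Milnor's Propositions A and C (and the low-dimensional classifications) as black boxes, you unfold their standard proofs --- carving out a disk and running the $h$-cobordism theorem for $p \geq 6$, and capping off with a disk and invoking $\Theta_5 = 0$ (resp.\ the Poincar\'{e} conjecture) together with the disk theorem for $p = 5$ (resp.\ $p = 3$).
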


\begin{proof}
The claim of part $(a)$ follows for $p \in \{1, 2\}$ from the well-known classification of compact smooth manifolds of dimension $p$.
For $p=3$ the contractible compact smooth $3$-manifold $W_{f}$ (see \Cref{contractibility of stein factorization}) is (according to \Cref{homology sphere as singular locus}) bounded by a homology $2$-sphere $\partial W_{f} \cong S(f)$, which is necessarily diffeomorphic to $S^{2}$.
Hence the claim follows from a standard application of the $3$-dimensional version of the $h$-cobordism theorem.
(The latter is by \cite[p. 113]{M1} a consequence of the classical Poincar\'{e} conjecture proven by Perelman.)
Concerning part $(b)$, the implication $(i) \Rightarrow (ii)$ holds because $q_{f}$ restricts by \Cref{stein factorization}$(ii)$ to a diffeomorphism $S(f) \cong \partial W_{f}$, and $\partial W_{f} \cong S^{p-1}$. Moreover, $(ii) \Rightarrow (iii)$ as $p > 2$. In order to show $(iii) \Rightarrow (i)$, first note that $W_{f}$ is by \Cref{contractibility of stein factorization} a contractible compact smooth manifold with simply connected boundary $S(f) \cong \partial W_{f}$ by $(iii)$. As $W_{f}$ is of dimension $p \geq 6$, the claim is a direct consequence of the $h$-cobordism theorem (see \cite[Proposition A, p. 108]{M1}).
Finally, part $(c)$ is covered by \cite[Proposition C 1), p. 110f]{M1}.
\end{proof}

The following is a sufficient criterion for a special generic map to be standard.

\begin{lemma}\label{sufficient condition for standardness}
Let $f \colon M^{n} \rightarrow \mathbb{R}^{p}$ be a special generic map. If $f(M^{n}) = D^{p}$ and $\overline{f} \colon W_{f} \rightarrow \mathbb{R}^{p}$ is injective, then $M^{n}$ is a homotopy sphere and $f$ is standard.
\end{lemma}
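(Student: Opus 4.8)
The plan is to show that the immersion $\overline{f}\colon W_{f}\to\mathbb{R}^{p}$ is in fact a diffeomorphism onto $D^{p}$; once this is established, both conclusions follow quickly, since $W_{f}\cong D^{p}$ is contractible and \Cref{contractibility of stein factorization} then forces $M^{n}$ to be a homotopy sphere, while $W_{f}\cong D^{p}$ is exactly the defining condition for $f$ to be standard (\Cref{definition standard special generic map}). So the whole argument reduces to analyzing $\overline{f}$.

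First I would pin down the image and the global topology of $\overline{f}$. Since $q_{f}\colon M^{n}\to W_{f}$ is surjective and $f=\overline{f}\circ q_{f}$, the image satisfies $\overline{f}(W_{f})=f(M^{n})=D^{p}$ by hypothesis. As $M^{n}$ is compact and $q_{f}$ is continuous and surjective, $W_{f}$ is compact; by \Cref{stein factorization} it is moreover a smooth $p$-manifold with boundary. The map $\overline{f}$ is continuous, injective by hypothesis, and surjective onto $D^{p}$, so as a continuous bijection from a compact space to a Hausdorff space it is a homeomorphism $W_{f}\stackrel{\cong}{\longrightarrow}D^{p}$.

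The heart of the proof---and the step I expect to be the main obstacle---is upgrading this homeomorphism to a diffeomorphism, the delicate issue being the behavior along $\partial W_{f}$. By \Cref{stein factorization}, $\overline{f}$ is an immersion, and since $\dim W_{f}=p=\dim\mathbb{R}^{p}$ its differential is an isomorphism at every point of $W_{f}$ (including boundary points, where $T_{x}W_{f}$ is still $p$-dimensional). At interior points the inverse function theorem therefore makes $\overline{f}$ a local diffeomorphism. To handle boundary points I would first invoke topological invariance of the boundary: a homeomorphism of manifolds-with-boundary carries boundary to boundary, so $\overline{f}(\partial W_{f})=\partial D^{p}=S^{p-1}$ and $\overline{f}(\operatorname{int}W_{f})=\operatorname{int}D^{p}$. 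Knowing that $\overline{f}$ sends boundary to boundary, the inverse function theorem for manifolds with boundary applies at each $x\in\partial W_{f}$ and exhibits $\overline{f}$ as a local diffeomorphism there as well. A local diffeomorphism that is a bijection is a diffeomorphism, whence $\overline{f}\colon W_{f}\stackrel{\cong}{\longrightarrow}D^{p}$.

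With $W_{f}\cong D^{p}$ in hand the proof concludes: $W_{f}$ is contractible, so by \Cref{contractibility of stein factorization} the manifold $M^{n}$ is a homotopy sphere, and $W_{f}\cong D^{p}$ is precisely the condition defining $f$ to be standard. The one subtlety worth double-checking is that the immersion $\overline{f}$ does not a priori send a boundary point of $W_{f}$ into $\operatorname{int}D^{p}$; this is exactly what the invariance-of-boundary step rules out, and it is what prevents a naive ``immersion plus bijection'' argument from going through without it.
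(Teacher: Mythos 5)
Your proposal is correct and follows essentially the same route as the paper: both arguments reduce to showing that the injective immersion $\overline{f}$ with compact domain and image $D^{p}$ is an embedding, hence a diffeomorphism $W_{f}\cong D^{p}$, after which \Cref{contractibility of stein factorization} and \Cref{definition standard special generic map} finish the job. The only difference is that you spell out the boundary-to-boundary step (invariance of boundary plus the inverse function theorem) that the paper's one-line "injective immersion on a compact domain is an embedding" leaves implicit.
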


\begin{proof}
Since $\overline{f}(W_{f}) = f(M^{n}) = D^{p}$, it suffices by \Cref{contractibility of stein factorization} and \Cref{definition standard special generic map} to note that the immersion $\overline{f}$ (see \Cref{stein factorization}$(i)$) is an embedding. This is true because $\overline{f}$ is injective by assumption and the domain $W_{f} = q_{f}(M^{n})$ is compact, so that the immersion $\overline{f}$ restricts to a homeomorphism onto its image.
\end{proof}

\begin{example}\label{ssgm on standard sphere}
\Cref{special generic maps on the standard sphere} and \Cref{sufficient condition for standardness} imply that any orthogonal projection $\mathbb{R}^{n+1} \rightarrow \mathbb{R}^{p}$ restricts to a standard special generic map on the standard sphere $S^{n}$ for all $p \in \{1, \dots, n-1\}$.
\end{example}

\begin{proposition}\label{composition of ssgm with orthogonal projections to the line are ssf}
Let $f \colon \Sigma^{n} \rightarrow \mathbb{R}^{p}$ be a standard special generic map.
Then, for every $u \in S^{p-1}$, the composition of the standard special generic map
$$
h \colon \Sigma^{n} \stackrel{q_{f}}{\longrightarrow} W_{f} \stackrel{\cong}{\longrightarrow} D^{p} \hookrightarrow \mathbb{R}^{p}
$$
(where a diffeomorphism $W_{f} \stackrel{\cong}{\longrightarrow} D^{p}$ has been fixed) with the orthogonal projection
$$
\pi_{u} \colon \mathbb{R}^{p} \rightarrow \mathbb{R}, \qquad v \mapsto u \cdot v,
$$
yields a special generic function $\pi_{u} \circ h \colon \Sigma^{n} \rightarrow \mathbb{R}$ with $S(\pi \circ h) = h^{-1}(\{\pm u\}) \cong S^{0}$.
\end{proposition}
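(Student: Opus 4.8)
The plan is to reduce the statement to the study of a height function on the ball pulled back along the Stein quotient. Using the fixed diffeomorphism $W_{f} \cong D^{p}$, I would identify the quotient map with a smooth map $q \colon \Sigma^{n} \rightarrow D^{p}$, so that $h = \iota \circ q$ for the inclusion $\iota \colon D^{p} \hookrightarrow \mathbb{R}^{p}$ and hence
$$
\pi_{u} \circ h = g \circ q, \qquad g \colon D^{p} \rightarrow \mathbb{R}, \quad g(x) = u \cdot x .
$$
Since $\overline{h} = \iota \circ (\text{diffeo})$ is an embedding with image $D^{p}$, the map $h$ is a special generic map with $S(h) = S(f)$ that is standard by \Cref{sufficient condition for standardness}; by \Cref{stein factorization} the map $q$ restricts to a diffeomorphism $S(f) \cong S^{p-1} = \partial D^{p}$ and satisfies $q^{-1}(\partial D^{p}) = S(f)$. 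The task thus becomes: show that $g \circ q$ is a Morse function whose only critical points are the two points $q^{-1}(u)$ and $q^{-1}(-u)$, both of them extrema.

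First I would locate the critical points. Away from $S(f)$ the map $q$ is a submersion: this follows from $q^{-1}(\partial D^{p}) = S(f)$ together with the fact that at a regular point of $f$ the immersion $\overline{f}$ (see \Cref{stein factorization}$(i)$) forces $q$ to have rank $p$. As $\nabla g = u \neq 0$ has no zeros, $g \circ q$ has no critical point off $S(f)$. Over the boundary I would invoke the local normal form of the Stein quotient near a definite fold: around $x \in S(f)$ there are coordinates $z_{1}, \dots, z_{n}$ and coordinates $(w_{1}, \dots, w_{p-1}, t)$ on $D^{p}$ with $\partial D^{p} = \{t = 0\}$, $t \geq 0$, in which
$$
q(z) = \left(z_{1}, \dots, z_{p-1}, \textstyle\sum_{i=p}^{n} z_{i}^{2}\right).
$$
Differentiating $g \circ q$, one sees that on $S(f)$ (where $z_{p} = \dots = z_{n} = 0$) the gradient vanishes exactly when the tangential derivatives $\partial_{w_{1}} g, \dots, \partial_{w_{p-1}} g$ vanish, i.e.\ at the critical points of the restricted height function $g|_{S^{p-1}}$. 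These are precisely $\pm u$, and since $q^{-1}(\pm u) \subset S(f)$ are single points $x_{\pm}$, the critical locus equals $\{x_{+}, x_{-}\} = h^{-1}(\{\pm u\}) \cong S^{0}$.

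Next I would establish nondegeneracy and the extremum property by computing the Hessian of $g \circ q$ at $x_{\pm}$ in the above coordinates. It comes out block diagonal,
$$
\textbf{H}(g \circ q)(x_{\pm}) = \begin{pmatrix} \textbf{H}(g|_{S^{p-1}})(\pm u) & 0 \\ 0 & 2\,\partial_{t} g(\pm u)\,\mathbb{I}_{n-p+1} \end{pmatrix},
$$
the tangential block being the Hessian of the height function on the sphere and the fibre block a scalar multiple of the identity. At $u$ the tangential block is negative definite and, because $\nabla g = u$ is the outward normal there, $\partial_{t} g(u) < 0$; at $-u$ both signs reverse. Hence the Hessian is negative definite at $x_{+}$ and positive definite at $x_{-}$, so these are a nondegenerate maximum and a nondegenerate minimum. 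Therefore $\pi_{u} \circ h = g \circ q$ is a Morse function all of whose critical points are extrema, i.e.\ a special generic function, with singular locus $h^{-1}(\{\pm u\}) \cong S^{0}$, as claimed.

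The main obstacle I anticipate is twofold. First, pinning down the local normal form of the Stein quotient $q$ near a definite fold --- in particular that $t = \sum_{i \geq p} z_{i}^{2}$, rather than its square root, is the correct smooth boundary coordinate; this rests on $n - p \geq 1$, so that the nearby fibres are connected spheres collapsing to points along $S(f)$. Second, controlling the sign of the normal derivative $\partial_{t} g$ so as to guarantee that $x_{\pm}$ are extrema rather than saddles: the required correlation between the sign of the tangential Hessian of $g|_{S^{p-1}}$ and the sign of $\partial_{t} g$ is exactly the special feature of a height function, whose only critical points on the sphere are its global maximum and minimum.
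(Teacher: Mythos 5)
Your proposal is correct and follows essentially the same route as the paper: identify the critical locus of $\pi_{u}\circ h$ with the preimage of the critical set of the height function on the fold-value sphere $S^{p-1}$, then verify non-degeneracy via the block-diagonal Hessian in the definite-fold normal form, whose tangential block is the Hessian of $\pi_{u}|_{S^{p-1}}$ and whose normal block is a nonzero multiple of $\mathbb{I}_{n-p+1}$. The only (harmless) deviations are that you phrase the normal form for the quotient map into $D^{p}$ rather than for $h$ into $\mathbb{R}^{p}$, and that you additionally pin down the signs of the two blocks to see the critical points are extrema, whereas the paper stops at non-degeneracy (a Morse function on a closed manifold with exactly two critical points automatically has both as extrema).
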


\begin{proof}
\Cref{stein factorization} and \Cref{sufficient condition for standardness} imply that $h$ is a standard special generic map with image $h(\Sigma^{n}) = D^{p} \subset \mathbb{R}^{p}$ and singular locus $S(h) = h^{-1}(S^{p-1})$.
Also note that $h$ restricts to a diffeomorphism $S(h) \stackrel{\cong}{\longrightarrow} S^{p-1}$.
Moreover, $\pi := \pi_{u}$ restricts by \Cref{special generic maps on the standard sphere} to a special generic function $S^{p-1} \rightarrow \mathbb{R}$ with singular locus $S(\pi|_{S^{p-1}}) = \pi^{-1}(S^{0}) = \{\pm u\}$.
Altogether, $S(\pi \circ h) = h^{-1}(\{\pm u\}) \cong S^{0}$.
It suffices to show that both critical points of $\pi \circ h$ are non-degenerate.

As every $c \in S(\pi \circ h)$ is also critical point of $h$, there exist by \Cref{special generic maps and functions} charts $\varphi \colon U \rightarrow U' \subset \mathbb{R}^{n} = \mathbb{R}^{p-1} \times \mathbb{R}^{n-p+1}$ and $\psi \colon V \rightarrow V' \subset \mathbb{R}^{p} = \mathbb{R}^{p-1} \times \mathbb{R}$ with $h(U) \subset V$, $c \in U$, $\varphi(c) = (0, 0)$, and such that
$$
h' := \psi \circ h \circ \varphi^{-1} \colon U' \rightarrow V', \qquad h'(x, y) = (x, ||y||^{2}).
$$
Set $\pi' := \pi \circ \psi^{-1} \colon V' \rightarrow \mathbb{R}$.
A short computation shows that the Hessian of $\pi' \circ h' = \pi \circ h \circ \varphi^{-1}$ at $(x, y) = (0, 0) = \varphi(c) \in U' \subset \mathbb{R}^{p-1} \times \mathbb{R}^{n-p+1}$ is given by a block matrix of the form
$$
\textbf{H}_{(0, 0)}(\pi' \circ h') = \left(\begin{matrix}
\textbf{H}_{(0, 0)}(\pi' \circ h'|_{U' \cap (\mathbb{R}^{p-1}\times 0)}) & 0 \\
0 & 2 \cdot (\partial_{p}\pi')(0, 0) \cdot \mathbb{I}_{n-p+1}
\end{matrix}\right).
$$

To establish that the Hessian $\textbf{H}_{(0, 0)}(\pi' \circ h'|_{U' \cap (\mathbb{R}^{p-1}\times 0)})$ is non-singular note that $\pi' \circ h'|_{U' \cap (\mathbb{R}^{p-1}\times 0)} = \pi \circ (h \circ \varphi^{-1})|_{U' \cap (\mathbb{R}^{p-1}\times 0)}$ is the composition of the embedding $h \circ \varphi^{-1}| \colon U' \cap (\mathbb{R}^{p-1}\times 0) = S(h') = S(h \circ \varphi^{-1}) \rightarrow S^{p-1}$ with the Morse function $\pi|_{S^{p-1}} \colon S^{p-1} \rightarrow \mathbb{R}$, and that $(h \circ \varphi^{-1})(0, 0) = h(c) \in \{\pm u\} = S(\pi|_{S^{p-1}})$.

It remains to show that $2 \cdot (\partial_{p}\pi')(0, 0) \cdot \mathbb{I}_{n-p+1}$ is non-singular, i.e., $(\partial_{p}\pi')(0, 0) \neq 0$.
For this purpose, note that $\varphi(c) = (0, 0)$ is a critical point of $\pi' \circ h' = (\pi \circ h) \circ \varphi^{-1}$ because $c$ is a critical point of $\pi \circ h$ and $\varphi$ is a diffeomorphism.
Hence, by the chain rule,
\begin{align*}
0 &= J(\pi' \circ h', (0, 0)) = J(\pi', h'(0, 0)) \cdot J(h', (0, 0)) \\
&= \left(\begin{matrix}
(\partial_{1}\pi')(0, 0) & \dots & (\partial_{p-1}\pi')(0, 0) & (\partial_{p}\pi')(0, 0) \\
\end{matrix}\right) \cdot \left(\begin{matrix}
\mathbb{I}_{p-1} & 0 & \dots & 0 \\
0 & 0 & \dots & 0
\end{matrix}\right) \\
&= \left(\begin{matrix}
(\partial_{1}\pi')(0, 0) & \dots & (\partial_{p-1}\pi')(0, 0) & 0 & \dots & 0
\end{matrix}\right).
\end{align*}
Thus, $(\partial_{1}\pi')(0, 0) = \dots = (\partial_{p-1}\pi')(0, 0) = 0$.
Consequently, $(\partial_{p}\pi')(0, 0) \neq 0$ as $\pi' \colon V' \rightarrow \mathbb{R}$ is a submersion.
\end{proof}

\begin{corollary}\label{composition of ssgm with orthogonal projections are ssgm}
Let $f \colon \Sigma^{n} \rightarrow \mathbb{R}^{p}$ be a standard special generic map.
Then, for every $q \in \{1, \dots, p-1\}$, the composition of the standard special generic map
$$
h \colon \Sigma^{n} \stackrel{q_{f}}{\longrightarrow} W_{f} \stackrel{\cong}{\longrightarrow} D^{p} \hookrightarrow \mathbb{R}^{p}
$$
(where a diffeomorphism $W_{f} \stackrel{\cong}{\longrightarrow} D^{p}$ has been fixed) with the projection $\pi \colon \mathbb{R}^{p} \rightarrow \mathbb{R}^{q}$ to the first $q$ components yields a standard special generic map $\pi \circ h \colon \Sigma^{n} \rightarrow \mathbb{R}^{q}$.
\end{corollary}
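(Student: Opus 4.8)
The plan is to deduce all structural facts about $\pi \circ h$ from those of $h$, to verify the fold condition locally near $S(h)$, and then to invoke \Cref{sufficient condition for standardness}. As established in the proof of \Cref{composition of ssgm with orthogonal projections to the line are ssf}, $h$ is a standard special generic map with $h(\Sigma^{n}) = D^{p}$ whose quotient map $q_{h} \colon \Sigma^{n} \to D^{p}$ has connected fibres and restricts to a diffeomorphism $S(h) \xrightarrow{\cong} S^{p-1} = \partial D^{p}$. Since the orthogonal projection $\pi$ carries $D^{p}$ onto $D^{q}$, we get $(\pi \circ h)(\Sigma^{n}) = D^{q}$. Away from $S(h)$ the map $h$ is a submersion onto $\operatorname{int} D^{p}$ and $\pi$ is a submersion, so $\pi \circ h$ is a submersion there; hence $S(\pi \circ h) \subseteq S(h)$.

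To see that $\pi \circ h$ is a special generic map I would fix $c \in S(\pi \circ h) \subseteq S(h)$ and use the normal form of \Cref{special generic maps and functions}: there are charts $\varphi, \psi$ in which $h$ reads $h'(x,y) = (x, \|y\|^{2})$ with $(x,y) \in \mathbb{R}^{p-1} \times \mathbb{R}^{n-p+1}$ and $c \mapsto (0,0)$. Writing $\pi' := \pi \circ \psi^{-1}$ for the resulting submersion in the target coordinates $(x,t)$, the composite reads $(x,y) \mapsto \pi'(x, \|y\|^{2})$ and so depends on $y$ only through $t := \|y\|^{2}$. After a linear change of the target $\mathbb{R}^{q}$ chosen so that the last output coordinate is the one pairing with $\partial_{t}\pi'$ at the origin, while the first $q-1$ outputs are $t$-independent there and submersive in $x$, these $q-1$ outputs may be promoted to the first $q-1$ source coordinates, bringing $\pi \circ h$ into the shape $F(z) = (z', f(z))$ of \Cref{criterion for a map to be a fold map}. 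A computation as in the proof of \Cref{composition of ssgm with orthogonal projections to the line are ssf} then shows that the fibre Hessian $\mathbf{H}_{y}(f_{z'})$ is block-diagonal, one block being the multiple $2\,(\partial_{t}\pi'_{q})(0,0) \cdot \mathbb{I}_{n-p+1}$ arising from the $\|y\|^{2}$-dependence and the other the fibre Hessian of the special generic map $\pi|_{S^{p-1}}$ (see \Cref{special generic maps on the standard sphere}) along $S(h) \cong S^{p-1}$; both blocks are non-degenerate, so $c$ is a fold point.

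The delicate point, and the one I expect to require the most care, is \emph{definiteness}: the fold at $c$ must have absolute index $n-q+1$, i.e.\ its fibre Hessian must be \emph{definite} and not merely non-singular, whereas the two blocks above could a priori carry opposite signs. I would circumvent a direct sign computation by the observation that a fold point of a fold map is definite precisely when it is isolated in its fibre, together with the fact that the fibre of $\pi \circ h$ through $c$ is the single point $c$. To that end note that at $c \in S(h)$ one has $\operatorname{im}(dh_{c}) = T_{h(c)}S^{p-1} = h(c)^{\perp}$, because $h = \overline{h} \circ q_{h}$ with $\overline{h}$ the inclusion $D^{p} \hookrightarrow \mathbb{R}^{p}$ and $d(q_{h})_{c}$ of rank $p-1$ onto the boundary. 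Hence $c \in S(\pi \circ h)$ forces $\pi|_{h(c)^{\perp}}$ to be non-surjective, which happens exactly when the last $p-q$ coordinates of $h(c)$ vanish; thus $w := (\pi \circ h)(c)$ satisfies $\|w\| = \|h(c)\| = 1$ and $h(c) = (w,0)$. Consequently $(\pi|_{D^{p}})^{-1}(w) = \{(w,0)\} = \{h(c)\}$, and since $q_{h}$ restricts to a diffeomorphism over $\partial D^{p}$ we obtain $(\pi \circ h)^{-1}(w) = q_{h}^{-1}(\{(w,0)\}) = \{c\}$. Therefore the fold at $c$ is definite, and $\pi \circ h$ is a special generic map.

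Finally, to prove standardness I would show that every fibre of $\pi \circ h$ is connected. For $w \in D^{q}$ the set $(\pi|_{D^{p}})^{-1}(w)$ is a closed disc (or, on $\partial D^{q}$, a single point), hence connected, and $(\pi \circ h)^{-1}(w) = q_{h}^{-1}((\pi|_{D^{p}})^{-1}(w))$ is then connected because $q_{h}$ is a quotient map with connected fibres, so that preimages of connected sets are connected. Connectedness of all fibres means exactly that the induced map $\overline{\pi \circ h} \colon W_{\pi \circ h} \to \mathbb{R}^{q}$ is injective. Together with $(\pi \circ h)(\Sigma^{n}) = D^{q}$, \Cref{sufficient condition for standardness} now yields that $\pi \circ h$ is a standard special generic map, completing the proof.
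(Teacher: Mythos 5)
Your argument is correct in substance but takes a genuinely different route from the paper at the one step that matters, namely showing that each singular point $c$ of $g:=\pi\circ h$ is a \emph{definite} fold point. The paper builds an adapted chart from a tubular neighbourhood of $S(g)$ so that the last component of $g$ in that chart is $\pi_{u}\circ g=\pi_{\tilde u}\circ h$ with $\tilde u=(u,0)$; by \Cref{composition of ssgm with orthogonal projections to the line are ssf} this is a special generic function with critical point $c$, so its \emph{full} Hessian at $c$ is definite, and restricting a definite form to the fibre directions yields non-degeneracy and definiteness simultaneously. You instead get non-degeneracy from a block computation in the normal-form chart of $h$, and definiteness from the observation that $(\pi\circ h)^{-1}\bigl((\pi\circ h)(c)\bigr)=\{c\}$ combined with the fact that a fold point is definite exactly when it is isolated in its fibre. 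Your fibre computation (singularity of $\pi\circ h$ at $c$ forces $h(c)=(w,0)$ with $\|w\|=1$, whence $(\pi|_{D^{p}})^{-1}(w)=\{(w,0)\}$ and $q_{h}^{-1}(\{(w,0)\})=\{c\}$) is correct and is an attractive, more geometric substitute for the paper's reduction to the $q=1$ case. The concluding standardness step (image $D^{q}$, connected fibres, \Cref{sufficient condition for standardness}) is the same as the paper's.

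Two points in your second paragraph are asserted rather than proved and deserve attention, though both can be repaired. First, the linear change of target presupposes $\partial_{t}\pi'(0,0)\neq 0$; this is not automatic from $\pi'$ being a submersion, but follows from $h(c)\in S^{q-1}\times 0$ (which you only establish in the \emph{next} paragraph) together with the fact that $d\psi^{-1}(0,0)(e_{p})$ is transverse to $T_{h(c)}S^{p-1}=h(c)^{\perp}$ — so establish $h(c)=(w,0)$ first. Second, because the first $q-1$ components of $(x,y)\mapsto\pi'(x,\|y\|^{2})$ depend on $y$, promoting them to source coordinates is a non-linear change, and the fibre Hessian of the last component picks up second-order correction terms of the form $\partial_{x}\pi'_{q}\cdot\partial^{2}x$; one must check that these do not spoil the claimed block structure (they do not: the cross terms and the correction to the $y$-block vanish because $\partial_{t}\pi'_{i}(0,0)=0$ for $i<q$), and that the remaining block is indeed the fibre Hessian of $\pi|_{S^{p-1}}$ at its singular point $h(c)$, hence non-degenerate by \Cref{special generic maps on the standard sphere}. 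This computation is genuinely more involved than the $q=1$ case of \Cref{composition of ssgm with orthogonal projections to the line are ssf} that you cite as a template, so it should be written out.
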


\begin{proof}
It suffices to show that $g := \pi \circ h \colon \Sigma^{n} \rightarrow \mathbb{R}^{q}$ is a special generic map.
In fact, \Cref{sufficient condition for standardness} will then imply that $g$ is standard because $g(\Sigma^{n}) = \pi(D^{p}) = D^{q}$, and the fibers of $g$ are all connected.
(To show that $g$ has connected fibers, one uses that $q_{f}$ and $\pi| \colon D^{p} \rightarrow D^{q}$ are surjections with connected fibers between compact Hausdorff spaces.)

Analogously to the proof of \Cref{composition of ssgm with orthogonal projections to the line are ssf} one shows that $h$ is a standard special generic map, and that $S(g) = h^{-1}(S(\pi|_{S^{p-1}})) = h^{-1}(S^{q-1} \times 0) \cong S^{q-1}$.
Furthermore, observe that $g$ restricts to a diffeomorphism $S(g) \stackrel{\cong}{\longrightarrow} S^{q-1}$.

Let $c \in S(g)$ be a singular point of $g$, and let $u := g(c) \in S^{q-1}$.
Let $\pi_{u} \colon \mathbb{R}^{q} \rightarrow \mathbb{R}$ denote the orthogonal projection given by $v \mapsto u \cdot v$.
Let $\{u_{1}, \dots, u_{q-1}, u\}$ be an extension to an orthonormal basis of $\mathbb{R}^{q}$.
Let $\pi_{u}^{\perp} \colon \mathbb{R}^{q} \rightarrow \mathbb{R}^{q-1}$ denote the orthogonal projection given by $\lambda_{1} u_{1} + \dots + \lambda_{q-1} u_{q-1} + \lambda u \mapsto (\lambda_{1}, \dots, \lambda_{q-1})$.
One obtains a linear isomorphism $\psi := (\pi_{u}^{\perp}, \pi_{u}) \colon \mathbb{R}^{q} \rightarrow \mathbb{R}^{q}$.

The composition $\alpha := \pi_{u}^{\perp} \circ g|_{S(g)}$ is a local diffeomorphism at $c$ such that $\alpha(c) = 0 \in \mathbb{R}^{q-1}$.
Thus, $\alpha$ restricts to a chart $U_{S} \stackrel{\cong}{\longrightarrow} U_{S}'$ of $S(g) \cong S^{q-1}$ between suitable open neighborhoods $c \in U_{S} \subset S(g)$ and $0 \in U_{S}' \subset \mathbb{R}^{q-1}$.
Chosen appropriately, $U_{S}$ has a trivial tubular neighborhood $\nu \colon U_{S} \times \mathbb{R}^{n-q+1} \stackrel{\cong}{\longrightarrow} U_{0}$ in $\Sigma^{n}$.
As $\pi_{u}^{\perp} \circ g|_{U_{S}} = \alpha|_{U_{S}}$, the Jacobian of
$$\varphi := (\pi_{u}^{\perp} \circ g, \operatorname{pr}_{\mathbb{R}^{n-q+1}} \circ \nu^{-1}) \colon U_{0} \rightarrow \mathbb{R}^{q-1} \times \mathbb{R}^{n-q+1}$$
is invertible at points in $S(g) \cap U_{0} = \nu(U_{S} \times 0) = U_{S}$.
In particular, $\varphi$ restricts to a chart $U \rightarrow U'$ of $\Sigma^{n}$ between suitable open neighborhoods $c \in U \subset U_{0}$ and $\varphi(c) = (0, 0) \in U' \subset \mathbb{R}^{q-1} \times \mathbb{R}^{n-q+1}$.

Using that $\pi_{u}^{\perp}(g(\varphi^{-1}(x, y))) = x$ by construction of $\varphi$, consider the composition
$$
\psi \circ g \circ \varphi^{-1} \colon U' \rightarrow \mathbb{R}^{q}, \qquad (x, y) \mapsto (x, (\pi_{u} \circ g \circ \varphi^{-1})(x, y)).
$$
Observe that $\pi_{u} \circ g = \pi_{\tilde{u}} \circ h$, where $\tilde{u} := (u, 0) \in S^{p-1} \subset \mathbb{R}^{q} \times \mathbb{R}^{p-q}$.
Now \Cref{composition of ssgm with orthogonal projections to the line are ssf} implies that $\pi_{\tilde{u}} \circ h$ is a special generic function whose singular locus $S(\pi_{\tilde{u}} \circ h) = h^{-1}(\{\pm \tilde{u}\})$ contains $c = h^{-1}(\tilde{u})$.
Consequently, $(0, 0) = \varphi(c)$ is a critical point of $\pi_{u} \circ g\circ \varphi^{-1}$ such that the Hessian $\textbf{H}_{(0, 0)}(\pi_{u} \circ g \circ \varphi^{-1})$ is definite.
Therefore, $(0, 0)$ is also a critical point of $\pi_{u} \circ g\circ \varphi^{-1}|_{U' \cap (0 \times \mathbb{R}^{n-q+1})}$ such that the Hessian $\textbf{H}_{(0, 0)}(\pi_{u} \circ g \circ \varphi^{-1}|_{U' \cap (0 \times \mathbb{R}^{n-q+1})})$ is definite.
In conclusion, \Cref{criterion for a map to be a fold map} implies that $c$ is a definite fold point of $g$.
\end{proof}

In view of \Cref{composition of ssgm with orthogonal projections are ssgm} we define the notion of fold perfection of homotopy spheres.

\begin{definition}\label{definition fold perfection}
The \emph{fold perfection} of a homotopy sphere $\Sigma^{n}$ ($n \geq 7$) is the greatest integer $p \geq 1$ for which there exists a standard special generic map $\Sigma^{n} \rightarrow \mathbb{R}^{p}$.
\end{definition}

Note that by (\ref{sgm on exotic spheres}) and \Cref{ssgm on standard sphere} the standard sphere $S^{n}$ is the only homotopy $n$-sphere of fold perfection $n-1$.
By (\ref{sgm on exotic spheres}) and \Cref{characterization of standard special generic maps}$(a)$ any homotopy sphere $\Sigma^{n}$ has fold perfection $\geq 2$.
In \Cref{proposition application to exotic spheres}$(ii)$ we will see that the fold perfection of Milnor spheres equals $2$.
Furthermore, \Cref{proposition application to exotic spheres}$(i)$ shows that there exist exotic spheres with fold perfection greater than $2$.

\begin{remark}
The notion of fold perfection has only been defined for homotopy spheres of dimension $\geq 7$.
As discussed in \Cref{preliminaries}, this is due to the fact that in lower dimensions the standard sphere turns out to be the only homotopy sphere that admits a special generic function.
In fact, it is well-known that there are no homotopy $n$-spheres except for the standard sphere $S^{n}$ in dimension $n \leq 6$, $n \neq 4$.
\end{remark}

The following fact will notably be applied to the Milnor $7$-sphere in \Cref{milnor 7-sphere}.

\begin{proposition}\label{fold perfection 2}
If $\Sigma^{n}$ ($n \geq 7$) is a homotopy sphere of fold perfection $2$, then $3 \notin S(\Sigma^{n})$.
\end{proposition}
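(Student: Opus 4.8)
The plan is to argue by contradiction, and the whole argument will hinge on the fact that in dimension $p=3$ the distinction between ``special generic'' and ``standard special generic'' disappears. Suppose, contrary to the claim, that $3 \in S(\Sigma^{n})$. By definition of $S(\Sigma^{n})$ this means that there exists a special generic map $f \colon \Sigma^{n} \rightarrow \mathbb{R}^{3}$. Since we are assuming $n \geq 7$, we have $p = 3 < n$, so $f$ falls exactly under the hypotheses of \Cref{characterization of standard special generic maps}.

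The key step is to invoke \Cref{characterization of standard special generic maps}$(a)$: because $p = 3 \in \{1, 2, 3\}$, the special generic map $f$ is automatically \emph{standard}, with no further hypotheses required. In other words, the low codimension forces the Stein factorization $W_{f}$ to be diffeomorphic to $D^{3}$ (this is where the $3$-dimensional Poincar\'{e} conjecture and the low-dimensional $h$-cobordism theorem are implicitly doing the work, but all of that is packaged in the already-established \Cref{characterization of standard special generic maps}).

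Having produced a standard special generic map $\Sigma^{n} \rightarrow \mathbb{R}^{3}$, I would then read off from \Cref{definition fold perfection} that the fold perfection of $\Sigma^{n}$ is the greatest integer $p$ admitting a standard special generic map into $\mathbb{R}^{p}$, and is therefore $\geq 3$. This directly contradicts the hypothesis that $\Sigma^{n}$ has fold perfection $2$. The contradiction shows that no special generic map $\Sigma^{n} \rightarrow \mathbb{R}^{3}$ can exist, i.e.\ $3 \notin S(\Sigma^{n})$.

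I do not anticipate a genuine obstacle here: the entire substance of the statement is the observation that fold perfection and the invariant $S$ coincide in the relevant range precisely because special generic maps into $\mathbb{R}^{3}$ are forced to be standard. The only point deserving care is verifying that the dimension hypothesis of \Cref{characterization of standard special generic maps} is met, namely $p < n$, which holds since $p = 3$ and $n \geq 7$. Everything else is a formal consequence of the definitions.
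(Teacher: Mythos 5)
Your argument is correct and is essentially identical to the paper's own proof: both invoke \Cref{characterization of standard special generic maps}$(a)$ to conclude that any special generic map $\Sigma^{n} \rightarrow \mathbb{R}^{3}$ is automatically standard, whence $3 \in S(\Sigma^{n})$ would force the fold perfection to be at least $3$. The extra care you take in checking the hypothesis $p < n$ is fine but not a substantive difference.
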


\begin{proof}
Any special generic map $\Sigma^{n} \rightarrow \mathbb{R}^{3}$ is standard by \Cref{characterization of standard special generic maps}$(a)$. Hence $3 \in S(\Sigma^{n})$ would imply that $\Sigma^{n}$ has fold perfection $\geq 3$.
\end{proof}

\begin{remark}\label{fold filtration remark}
By \Cref{composition of ssgm with orthogonal projections are ssgm} the notion of fold perfection gives rise to a filtration of $\Theta_{n}$ ($n \geq 7$) by subsets $F^{n}_{n-1} \subset \dots \subset F^{n}_{1} \subset \Theta_{n}$, where, by definition, $[\Sigma^{n}] \in F^{n}_{p}$ if the fold perfection of $\Sigma^{n}$ is $\geq p$.
The proof of \cite[Lemma 5.4, p. 278]{S1} implies that $F^{n}_{p}$ is in fact a filtration of $\Theta_{n}$ by sub\emph{groups}, which contains the Gromoll filtration $\Gamma^{n}_{p}$ by \Cref{MAIN THEOREM}$(i)$.
We do not know whether the groups $F^{n}_{p}$ and $\Gamma^{n}_{p}$ do in general coincide or not.
\end{remark}

\section{Proof of \Cref{MAIN THEOREM}}\label{proof of main theorem}

Let $\Sigma^{n}$ be a homotopy sphere of dimension $n \geq 7$. \par\medskip

The proof of part $(i)$ is a modification of the original proof due to Weiss (see \cite[\S 4, pp. 403 ff]{W}) of \Cref{gromoll filtration and morse perfection}.

According to \Cref{homotopy spheres} there exists a diffeomorphism $g \in \operatorname{Diff}(D^{n-1}, \partial)$ such that $\Sigma^{n}$ is diffeomorphic to the twisted sphere obtained by gluing the open balls
\begin{align*}
V_{-} := S^{n} \setminus \{(1, 0, \dots, 0)\}, \\
V_{+} := S^{n} \setminus \{(-1, 0, \dots, 0)\},
\end{align*}
along the open cylinder $V_{-} \cap V_{+} = S^{n} \setminus \{(\pm 1, 0, \dots, 0)\} \cong S^{n-1} \times (-1, 1) =: C$.
The result of the gluing procedure is described by a pushout diagram of the form
\begin{center}
\begin{tikzpicture}
\path (0,0) node(a) {$C$}
         (4, 0) node(b) {$V_{+}$}
         (0, -2) node(c) {$V_{-}$}
         (4, -2) node(d) {$\Sigma^{n}$};
\draw[->] (a) -- node[left] {$j_{-}$} (c);
\draw[->] (a) -- node[above] {$j_{+} \circ (\varphi_{g} \times \operatorname{id}_{(-1, 1)})$} (b);
\draw[->] (b) -- node[right] {$k_{+}$} (d);
\draw[->] (c) -- node[above] {$k_{-}$} (d);
\end{tikzpicture},
\end{center}
where the embeddings $j_{-} \colon C \rightarrow V_{-}$ and $j_{+} \colon C \rightarrow V_{+}$ are both given by the formula
$$
(z, t) \mapsto (\sqrt{1-t^{2}} \cdot z, t) \in V_{-} \cap V_{+} \subset \mathbb{R}^{n} \times \mathbb{R}.
$$

Supposing that $\Sigma^{n}$ has Gromoll filtration $> p$, we may assume by \Cref{The Gromoll filtration} that $g$ is chosen such that $\pi^{n-1}_{p}(g(x)) = \pi^{n-1}_{p}(x)$ for all $x \in D^{n-1}$.
For the associated diffeomorphism $\varphi_{g} \colon S^{n-1} \rightarrow S^{n-1}$ this implies $\pi^{n}_{p}(\varphi_{g}(z)) = \pi^{n}_{p}(z)$ for all $z \in S^{n-1}$.
(In fact, in terms of the embeddings $\iota_{\pm} \colon D^{n-1} \rightarrow S^{n-1}$ of \Cref{homotopy spheres} the diffeomorphism $\varphi_{g}$ satisfies for all $x \in D^{n-1}$ the equations
$$\pi^{n}_{p}(\varphi_{g}(\iota_{-}(x))) = \pi^{n}_{p}(\iota_{-}(g(x))) = \pi^{n}_{p}(-\sqrt{1-||g(x)||^{2}}, g(x)) = \pi^{n-1}_{p}(g(x)) = \pi^{n}_{p}(x)$$
as well as $\pi^{n}_{p}(\varphi_{g}(\iota_{+}(x))) = \pi^{n}_{p}(\iota_{+}(x)) = \pi^{n}_{p}(\sqrt{1-||x||^{2}}, x) = \pi^{n-1}_{p}(x)$.)

Consequently, the following diagram commutes.
\begin{center}
\begin{tikzpicture}

\path (0,0) node(a) {$C$}
         (4, 0) node(b) {$V_{+}$}
         (4, -1) node(c) {$\mathbb{R}^{n+1}$}
         (0, -2) node(d) {$V_{-}$}
         (2, -2) node(e) {$\mathbb{R}^{n+1}$}
         (4, -2) node(f) {$\mathbb{R}^{p+1}$};
\draw[->] (a) -- node[left] {$j_{-}$} (d);
\draw[->] (a) -- node[above] {$j_{+} \circ (\varphi_{g} \times \operatorname{id}_{(-1, 1)})$} (b);
\draw[->] (b) -- node[right] {$\operatorname{incl}$} (c);
\draw[->] (c) -- node[right] {$\pi^{n+1}_{p+1}$} (f);
\draw[->] (d) -- node[above] {$\operatorname{incl}$} (e);
\draw[->] (e) -- node[above] {$\pi^{n+1}_{p+1}$} (f);
\end{tikzpicture}
\end{center}
(In fact, for all $(z, t) \in C$ we have
\begin{align*}
&(\pi^{n+1}_{p+1} \circ j_{+} \circ (\varphi_{g} \times \operatorname{id}_{(-1, 1)}))(z, t) \\
&= \pi^{n+1}_{p+1}(j_{+}(\varphi_{g}(z), t)) = \pi^{n+1}_{p+1}(\sqrt{1-t^{2}} \cdot \varphi_{g}(z), t) \\
&= (\sqrt{1-t^{2}} \cdot \pi^{n}_{p}(\varphi_{g}(z)), t) = (\sqrt{1-t^{2}} \cdot \pi^{n}_{p}(z), t) \\
&= \pi^{n+1}_{p+1}(\sqrt{1-t^{2}} \cdot z, t) = (\pi^{n+1}_{p+1} \circ j_{-})(z, t).)
\end{align*}

Thus, the universal property of the above pushout diagram gives rise to a map $f \colon \Sigma^{n} \rightarrow \mathbb{R}^{p+1}$ such that $f \circ k_{\pm} = \pi^{n+1}_{p+1}|_{V_{\pm}}$.
With $k_{-}(V_{-}) \cup k_{+}(V_{+})$ being an open cover of $\Sigma^{n}$, \Cref{special generic maps on the standard sphere} and \Cref{sufficient condition for standardness} imply that $f$ is a standard special generic map.
In conclusion, $\Sigma^{n}$ has fold perfection $> p$.
\par\medskip

The proof of part $(ii)$ follows immediately from \Cref{composition of ssgm with orthogonal projections to the line are ssf}. In fact, suppose that $\Sigma^{n}$ has fold perfection $\geq p$. If $f \colon \Sigma^{n} \rightarrow \mathbb{R}^{p}$ is a standard special generic map and $\iota \colon W_{f} \cong D^{p}$ is a diffeomorphism, then the smooth map
$$
\eta \colon S^{p-1} \times \Sigma^{n} \rightarrow \mathbb{R}, \qquad \eta(u, x) = u \cdot \iota(q_{f}(x))
$$
satisfies properties $(1)$ and $(2)$ of \Cref{definition morse perfection}, which shows that $\Sigma^{n}$ has Morse perfection $\geq p-1$.
\par\medskip

This completes the proof of \Cref{MAIN THEOREM}.

\section{Applications}\label{Application to exotic spheres}

We start with an application of \Cref{MAIN THEOREM} to some concrete exotic spheres.

\begin{proposition}\label{proposition application to exotic spheres}
\begin{enumerate}[$(i)$]
\item In certain dimensions $n \geq 7$ the group $\Theta_{n}$ is known to contain exotic spheres of great depth in the Gromoll filtration. For instance, $\Gamma_{3}^{10} \neq 0$ and $\Gamma_{11}^{18} \neq 0$ by \cite[Appendix A]{CS}. Hence, these exotic spheres have at least an accordingly great fold perfection by \Cref{MAIN THEOREM}$(i)$.
\item Let $n = 4k-1$ for some integer $k \geq 2$, and let $\Sigma^{n}_{M}$ denote the \emph{Milnor $n$-sphere}, i.e., $\Sigma^{n}_{M} = \partial W^{n+1}$ for some parallelizable cobordism $W^{n}$ with signature $8$. By \cite[p. 390]{W} one has $(\text{Gromoll filtration of } \Sigma^{n}_{M})-1 = 1 = \text{Morse perfection of } \Sigma^{n}_{M}$.
Consequently, the fold perfection of $\Sigma^{n}_{M}$ is precisely $2$ by \Cref{MAIN THEOREM}.
\end{enumerate}
\end{proposition}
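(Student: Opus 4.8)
The plan is to treat both parts as direct bookkeeping consequences of \Cref{MAIN THEOREM}, feeding in the values of the Gromoll filtration and of the Morse perfection that are supplied by the cited references; no new geometry is required. The only care needed is to chain the two inequalities of \Cref{MAIN THEOREM} in the correct direction, keeping in mind that a nonzero element $\gamma \in \Gamma^{n}_{p}$ exhibits an exotic sphere whose Gromoll filtration is $\geq p$, since the subgroups $\Gamma^{n}_{p}$ are nested decreasingly in $p$ (see \Cref{The Gromoll filtration}).

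For part $(i)$ I would first invoke \cite[Appendix A]{CS} to fix nonzero elements $\gamma \in \Gamma_{3}^{10}$ and $\gamma' \in \Gamma_{11}^{18}$. By \Cref{The Gromoll filtration} the associated exotic spheres $\Sigma(\gamma) \in \Theta_{10}$ and $\Sigma(\gamma') \in \Theta_{18}$ then have Gromoll filtration $\geq 3$ and $\geq 11$, respectively. Applying the lower bound of \Cref{MAIN THEOREM}$(i)$ --- fold perfection $\geq$ Gromoll filtration --- immediately gives fold perfection $\geq 3$ for $\Sigma(\gamma)$ and $\geq 11$ for $\Sigma(\gamma')$, which is the stated conclusion.

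For part $(ii)$ I would read off from \cite[p. 390]{W} that the Milnor sphere $\Sigma^{n}_{M}$ (where $n = 4k-1$, $k \geq 2$) has Gromoll filtration $2$ and Morse perfection $1$. The decisive step is then a squeeze: \Cref{MAIN THEOREM}$(i)$ yields $2 \leq (\text{fold perfection of } \Sigma^{n}_{M})$, while \Cref{MAIN THEOREM}$(ii)$ yields $(\text{fold perfection of } \Sigma^{n}_{M}) - 1 \leq 1$, i.e.\ fold perfection $\leq 2$. Combining the two bounds forces the fold perfection of $\Sigma^{n}_{M}$ to be exactly $2$.

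I expect no substantial obstacle here: the genuine content lies in \Cref{MAIN THEOREM} itself and in the external computations of the Gromoll filtration (\cite{CS}) and of the Morse perfection (the algebraic $K$-theory bounds used in \cite{W}). The one point that demands attention is precisely the bookkeeping in part $(ii)$ --- arranging the two inequalities of \Cref{MAIN THEOREM} so that they sandwich the fold perfection from both sides, which is what upgrades the conclusion from fold perfection $\geq 2$ to the sharp value fold perfection $= 2$.
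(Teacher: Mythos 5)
Your proposal is correct and is exactly the argument the paper intends: the proposition is stated as a direct consequence of \Cref{MAIN THEOREM} together with the cited computations, and your squeeze in part $(ii)$ (Gromoll filtration $2 \leq$ fold perfection from \Cref{MAIN THEOREM}$(i)$, fold perfection $-1 \leq$ Morse perfection $=1$ from \Cref{MAIN THEOREM}$(ii)$) is precisely how the sharp value $2$ is obtained. No discrepancies to report.
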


In view of (\ref{sgm on exotic spheres}), \Cref{proposition application to exotic spheres}$(ii)$ implies the following answer of \Cref{study special generic maps} for the Milnor $7$-sphere by invoking \Cref{fold perfection 2}.

\begin{corollary}\label{milnor 7-sphere}
The Milnor $7$-sphere $\Sigma^{7}_{M}$ of \Cref{proposition application to exotic spheres}$(ii)$ satisfies 
$$
S(\Sigma^{7}_{M}) = \{1, 2, 7\}.
$$
\end{corollary}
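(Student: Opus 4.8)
The plan is to sandwich $S(\Sigma^7_M)$ between the known lower bound $\{1,2,7\}$ and an upper bound, and then to rule out the single remaining ambiguous value $3$ by appealing to the fold perfection computation. First I would specialize the general inclusion (\ref{sgm on exotic spheres}) to $n=7$: since $n-4 = 3$, this reads
$$
\{1, 2, 7\} \subset S(\Sigma^7_M) \subset \{1, 2, 3, 7\}.
$$
Thus the only question is whether $3 \in S(\Sigma^7_M)$; every other membership is already settled by (\ref{sgm on exotic spheres}).

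Next I would invoke the fold perfection of the Milnor $7$-sphere. By \Cref{proposition application to exotic spheres}$(ii)$, applied with $k=2$ so that $n = 4k-1 = 7$, the fold perfection of $\Sigma^7_M$ is precisely $2$. This is exactly the hypothesis needed to apply \Cref{fold perfection 2}: since $n = 7 \geq 7$ and $\Sigma^7_M$ has fold perfection $2$, we conclude $3 \notin S(\Sigma^7_M)$.

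Combining the two observations, the value $3$ is excluded from the upper bound, leaving $S(\Sigma^7_M) = \{1, 2, 7\}$, as claimed. The proof is essentially an assembly of the earlier results rather than a fresh argument: the genuine mathematical content has already been carried by \Cref{proposition application to exotic spheres}$(ii)$ (which itself rests on \Cref{MAIN THEOREM} together with Weiss's computation of the Morse perfection of $\Sigma^7_M$) and by \Cref{fold perfection 2} (which uses that any special generic map into $\mathbb{R}^3$ is automatically standard, via \Cref{characterization of standard special generic maps}$(a)$). Accordingly, the only step requiring any care is bookkeeping the dimension constraint $n = 4k-1$ so that the Milnor $7$-sphere indeed falls under \Cref{proposition application to exotic spheres}$(ii)$; no real obstacle arises.
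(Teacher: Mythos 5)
Your proposal is correct and follows exactly the paper's own route: specializing (\ref{sgm on exotic spheres}) to $n=7$ to get $\{1,2,7\} \subset S(\Sigma^{7}_{M}) \subset \{1,2,3,7\}$, and then excluding $3$ via \Cref{proposition application to exotic spheres}$(ii)$ combined with \Cref{fold perfection 2}. No issues.
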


\begin{remark}
Since the subgroup $F_{3}^{7} \subset \Theta_{7}$ (see \Cref{fold filtration remark}) has at least index $2$ due to $[\Sigma^{7}_{M}] \notin F_{3}^{7}$, there are in fact at least $14$ exotic $7$-spheres $\Sigma^{7}$ with $S(\Sigma^{7}) = \{1, 2, 7\}$.
We do not know the actual size of the groups $\Gamma_{3}^{7} \subset F_{3}^{7}$.
\end{remark}

With regard to \Cref{study special generic maps} we obtain the following immediate consequence of \Cref{MAIN THEOREM}$(i)$ and \Cref{composition of ssgm with orthogonal projections are ssgm}.

\begin{proposition}\label{gromoll corollary}
If $\Sigma^{n}$ is a homotopy sphere of dimension $n \geq 7$ whose Gromoll filtration is $\geq k$, then
$$
\{1, \dots, k, n\} \subset S(\Sigma^{n}).
$$
\end{proposition}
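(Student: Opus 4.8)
The plan is to split the desired containment $\{1, \dots, k, n\} \subset S(\Sigma^{n})$ into the lower range $\{1, \dots, k\}$, which I would extract from the results on standard special generic maps, and the isolated top dimension $n$, which requires a separate classical input.

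For the lower range, I would first observe that the hypothesis combined with \Cref{MAIN THEOREM}$(i)$ forces the fold perfection of $\Sigma^{n}$ to satisfy $\text{fold perfection} \geq \text{Gromoll filtration} \geq k$. By \Cref{definition fold perfection} there is then a standard special generic map $f \colon \Sigma^{n} \to \mathbb{R}^{p_{0}}$ with $p_{0} \geq k$. Feeding $f$ into \Cref{composition of ssgm with orthogonal projections are ssgm} produces, for each $q \in \{1, \dots, p_{0}-1\}$, a standard special generic map $\Sigma^{n} \to \mathbb{R}^{q}$; adjoining $f$ itself, one obtains such maps for every $q \in \{1, \dots, p_{0}\}$, and in particular for every $q \in \{1, \dots, k\}$. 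Since any standard special generic map is by \Cref{definition standard special generic map} a special generic map, this yields $\{1, \dots, k\} \subset S(\Sigma^{n})$.

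To obtain $n \in S(\Sigma^{n})$ I would invoke Eliashberg's theorem (cited in the introduction): a closed orientable $n$-manifold admits a special generic map into $\mathbb{R}^{n}$ precisely when it is stably parallelizable. Homotopy spheres are stably parallelizable, so $n \in S(\Sigma^{n})$; equivalently, one may quote the inclusion (\ref{sgm on exotic spheres}), which already records this fact. Because the statement is flagged as an immediate consequence of \Cref{MAIN THEOREM}$(i)$ and \Cref{composition of ssgm with orthogonal projections are ssgm}, I expect no genuine obstacle. The only point deserving a moment's attention is that \Cref{composition of ssgm with orthogonal projections are ssgm} decreases the target dimension by one step at a time, so the argument relies on the observation that a single standard map into $\mathbb{R}^{p_{0}}$ with $p_{0} \geq k$ already fills the entire block $\{1, \dots, k\}$ rather than merely the endpoint $k$.
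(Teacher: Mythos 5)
Your argument is correct and is precisely the reasoning the paper intends when it calls the statement an immediate consequence of \Cref{MAIN THEOREM}$(i)$ and \Cref{composition of ssgm with orthogonal projections are ssgm}: Gromoll filtration $\geq k$ gives fold perfection $\geq k$, a single standard special generic map into $\mathbb{R}^{p_{0}}$ with $p_{0}\geq k$ then yields standard (hence special generic) maps into $\mathbb{R}^{q}$ for all $q\leq p_{0}$, and $n\in S(\Sigma^{n})$ is supplied by the classical fact recorded in (\ref{sgm on exotic spheres}) (equivalently, Eliashberg's theorem together with the stable parallelizability of homotopy spheres). Your closing observation that one map into $\mathbb{R}^{p_{0}}$ fills the entire block $\{1,\dots,k\}$ is exactly the point of \Cref{composition of ssgm with orthogonal projections are ssgm}, so there is nothing to add.
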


\begin{remark}\label{remark on study of sgm}
Let us compare \Cref{gromoll corollary} with the inclusions of (\ref{sgm on exotic spheres}).
\begin{enumerate}[$(i)$]
\item \Cref{gromoll corollary} implies that a homotopy sphere $\Sigma^{n}$ ($n \geq 7$) with Gromoll filtration $\geq n-3$ must be diffeomorphic to $S^{n}$ because the second inclusion of (\ref{sgm on exotic spheres}) shows that $n-3 \notin S(\Sigma^{n})$ whenever $\Sigma^{n}$ is an exotic sphere.
Indeed, both of these facts are known to follow from Hatcher's proof \cite{H} of the Smale conjecture according to the introduction of \cite{CS} and \cite[Remark 2.4, p. 165]{S2}.
\item Furthermore, it is known that every homotopy sphere $\Sigma^{n}$ of dimension $n \geq 7$ has Gromoll filtration $\geq 2$, and this fact implies via \Cref{gromoll corollary} the inclusion $\{1, 2, n\} \subset S(\Sigma^{n})$ of (\ref{sgm on exotic spheres}).
Both of these facts follow from Cerf's work \cite{C} as pointed out in the introduction of \cite{CS} and in \cite[p. 279]{S1}.
\end{enumerate}
\end{remark}

\bibliographystyle{amsplain}

\end{document}